\documentclass[10pt]{amsart}

\usepackage{amsmath, amssymb, amsthm, mathtools}

\DeclareMathOperator{\dist}{\mathsf{dist}}
\newtheorem{thm}{Theorem}[section]
 \newtheorem{proposition}[thm]{Proposition}  
\newtheorem{lemma}[thm]{Lemma}
\newtheorem{lem}[thm]{Lemma}
\newtheorem{corollary}[thm]{Corollary}
\newtheorem{prop}[thm]{Proposition}

\newtheorem{definition}[thm]{Definition}
\newtheorem{remark}[thm]{Remark}

\numberwithin{equation}{section}

\title{A Note on Hyperbolically Embedded Subgroups}
\author{Eduardo Mart\'inez-Pedroza and Farhan Rashid}
\date{\today}
\keywords{Hyperbolically embedded, Relatively hyperbolic, Acylindrically hyperbolic, Fine graph}
\subjclass[2010]{20F65, 20F67}

\begin{document}

\maketitle

\begin{abstract}
Let $G$ be a group and $H$ a subgroup of $G$. The notion of $H$ being hyperbolically embedded in $G$ was introduced by Dahmani, Guiraldel and Osin.  This note introduces an equivalent definition of hyperbolic embedded subgroup based on Bowditch's approach to relatively hyperbolic groups in terms of fine graphs.     
\end{abstract}

\section{Introduction}
Hyperbolically embedded subgroups were introduced by Dahmani, Guiraldel and Osin~\cite{osin2}, their definition is recalled in Section~\ref{sec:HypEmbDef}. Given a group $G$, $X\subset G$ and $H\leq G$, let $H\hookrightarrow_h (G,X)$ denote that $H$ is a   hyperbolically embedded subgroup of $G$ with respect to $X$. The class of groups containing a hyperbolically embedded subgroup includes hyperbolic groups, relatively hyperbolic groups, and some non-relatively hyperbolic examples including most mapping class groups of surfaces. 
 
Let $\Gamma$ be a simplicial graph, let $v$ be a vertex of  $\Gamma$, and let \begin{align*}
T_v \Gamma = \{w \in V(\Gamma) \mid \{v,w\}\in E(\Gamma)\}.
\end{align*}
denote the set of the  vertices adjacent to $v$.
For $x,y \in T_v \Gamma$,  the \emph{angle metric} $\angle_v (x,y)$ is the combinatorial length of the shortest path in the graph  $\Gamma - \{v\}$ between $x$ and $y$, with $\angle_v (x,y) = \infty$ if there is no such path.  The graph $\Gamma$ is \emph{fine at $v$} if $(T_v \Gamma,\angle_v)$ is a locally finite metric space. 
The graph $\Gamma$ is   \emph{fine} if it is fine at every vertex, this notion  was introduced by Bowditch~\cite{bowditch}. 

There is an approach to relatively hyperbolic groups by Bowditch based on the notion of \emph{fine graph}~\cite{bowditch}. The main result of the note is a characterization of hyperbolically embedded subgroups that generalizes Bowditch's cited approach. 

\begin{definition}\label{def:GHgraph}
Let $G$ be a group and let $H$ be a subgroup. 
A graph $\Gamma$ is a $(G,H)$-graph if $G$ acts on $\Gamma$ and
\begin{enumerate}
\item $\Gamma$ is connected and hyperbolic,
\item there are finitely many $G$-orbits of vertices, 
\item $G$-stabilizers of vertices are finite or  conjugates of $H$,
 and there is a vertex with $G$-stabilizer   equals $H$,
\item $G$-stabilizers  of edges are finite, and
\item \label{item:05} $\Gamma$ is fine at
$V_\infty (\Gamma) = \{v \in V(\Gamma) \mid v \text{ has infinite stabilizer} \}$.
\end{enumerate}
\end{definition}

Bowditch's definition~\cite[Definition 2]{bowditch} of relative hyperbolicity can be phrased as follows: A group {$G$ is hyperbolic relative to a subgroup of $H$} if and only if there exists a $(G,H)$-graph that has finitely many $G$-orbits of edges. This note proves the following result.

\begin{thm}\label{thm:main}
Let $G$ be a finitely generated group.
An infinite subgroup $H$ is hyperbolically embedded in $G$  if and only if there exists a $(G,H)$-graph.
\end{thm}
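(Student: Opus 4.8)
The plan is to build, in each direction, an explicit dictionary between the two ways of encoding $(G,H)$: the Cayley graph $\Gamma(G,X\sqcup H)$ with its relative metric $\hat d$ used by Dahmani--Guirardel--Osin, and a $(G,H)$-graph with its angle metrics. For the direction ``$H\hookrightarrow_h(G,X)\Rightarrow$ a $(G,H)$-graph exists'', assume $H\hookrightarrow_h(G,X)$ and take $\Gamma$ to be the coned-off graph with vertex set $G\sqcup(G/H)$, Cayley edges $\{g,gx\}$ for $x\in X$ and cone edges $\{g,gH\}$, with $G$ acting by left multiplication. Conditions (2)--(4) are then immediate: there are two vertex orbits; vertex stabilizers are trivial or conjugate to $H$, with $eH$ having stabilizer exactly $H$; and edge stabilizers are trivial since each edge is incident to a $G$-vertex. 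Because $H$ is infinite, the only vertices in $V_\infty(\Gamma)$ are the cone vertices $gH$, which form a single orbit, so only (1) and (5) require work.

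For (1), the orbit map $g\mapsto g$ identifies the $G$-vertices of $\Gamma$ with $\Gamma(G,X\sqcup H)$ up to a factor-two distortion (a coset clique of diameter $1$ becomes a star of diameter $2$ through its cone vertex), so $\Gamma$ is quasi-isometric to $\Gamma(G,X\sqcup H)$ and hence hyperbolic. For (5) it suffices, by equivariance, to be fine at $eH$, where $T_{eH}\Gamma=H$. Here I would establish $\hat d(h_1,h_2)\le \angle_{eH}(h_1,h_2)\le 2\,\hat d(h_1,h_2)$ via the correspondence: an $H$-edge inside a coset $gH\neq H$ (permitted for $\hat d$) matches the length-$2$ detour through the cone vertex $gH$ in $\Gamma-\{eH\}$, while the forbidden coset-$H$ edges correspond exactly to paths through the deleted vertex $eH$. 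Thus fineness at $eH$ is equivalent to local finiteness of $\hat d$, which holds by hypothesis.

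Conversely, given a $(G,H)$-graph $\Gamma$, fix a vertex $v_0$ with stabilizer $H$ and set $X=\{g\in G:\dist_\Gamma(v_0,gv_0)\le D\}$, where $D$ exceeds the radius within which the orbit $Gv_0$ is coarsely dense in $\Gamma$ (such $D$ exists by connectedness and finiteness of the vertex orbits). One checks $X=HXH=X^{-1}$ and, by pushing an edge-path from $v_0$ to $gv_0$ into the orbit, that $\langle X\cup H\rangle=G$. The orbit map $g\mapsto gv_0$ is then a $G$-equivariant quasi-isometry $\Gamma(G,X\sqcup H)\to\Gamma$ (each $X$- or $H$-edge displaces the basepoint by at most $D$, and $\Gamma$-geodesics push into the orbit with controlled length), so $\Gamma(G,X\sqcup H)$ is hyperbolic and condition (a) of hyperbolic embeddedness holds.

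The remaining point, properness of $\hat d$, is the main obstacle, and it is here that the proof is delicate: the orbit map collapses the whole coset $H$ (indeed every coset) to a single vertex, so $\hat d$ cannot be read off the orbit naively. I would fix an edge $e_0=\{v_0,w_0\}$; since edge stabilizers are finite, $\mathrm{Stab}_H(w_0)$ is finite and $h\mapsto hw_0$ is finite-to-one, so it suffices to bound the cardinality of $\{hw_0:\hat d(1,h)\le N\}$. The plan is to prove an implication $\hat d(1,h)\le N\Rightarrow\angle_{v_0}(w_0,hw_0)\le\psi(N)$ and then invoke local finiteness of $\angle_{v_0}$, i.e.\ fineness at $v_0$. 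To produce the bounding path I would cut a $\hat d$-geodesic from $1$ to $h$ at its visits to the coset $H$ into excursions and translate each excursion by the relevant element of $H$ into a path in $\Gamma$ between $H$-translates of $w_0$; the crucial structural fact is that the first and last edges of each excursion are $X$-edges (an $H$-edge at an $H$-vertex would lie in the forbidden coset-$H$ clique), so each excursion exits and re-enters through the link of $v_0$ rather than through $v_0$ itself. The hard part will be ensuring these translated detours genuinely avoid $v_0$ and have length bounded in terms of $N$ and $D$; I expect this to require the finiteness of edge stabilizers together with the fineness hypothesis to keep the link $T_{v_0}\Gamma$ under control. Combined with the forward comparison, this yields the sought equivalence between fineness at $v_0$ and properness of $\hat d$, and hence the theorem.
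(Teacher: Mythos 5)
Your ``only if'' direction is correct and is essentially the paper's own argument (Lemma~\ref{prop:ConedOffCayleyGraph} and Proposition~\ref{q2}): the coned-off Cayley graph is a $(G,H)$-graph, with hyperbolicity coming from the quasi-isometry with $\Gamma(G,X\sqcup H)$ and fineness at cone vertices from the bi-Lipschitz comparison of $\hat d$ with the angle metric.

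The converse, however, has a fatal gap, and it sits exactly where you flagged ``the hard part'': for your choice of $X$, properness of $\hat d$ is not delicate --- it is false. You set $X=\{g\in G:\dist_\Gamma(v_0,g.v_0)\le D\}$ and correctly observe that $X=HXH=X^{-1}$; note also that $H\subseteq X$, since $\dist_\Gamma(v_0,h.v_0)=0$. This bi-invariance is not a harmless symmetry but the obstruction itself. In any nontrivial case $G\neq H$, your generation requirement $\langle X\cup H\rangle=G$ forces the existence of some $x\in X\setminus H$. Then for \emph{every} $h\in H$, the path $[1,x,h]$ in $\Gamma(G,X\sqcup H)$ is admissible: its first edge is labelled by the letter $x\in X$, its second edge is labelled by $x^{-1}h$, which lies in $Hx^{-1}H\subseteq X$ by symmetry and bi-invariance of $X$, and which is not an element of $H$ because $x\notin H$. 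Hence $\hat d(1,h)\le 2$ for all $h\in H$, so the $\hat d$-ball of radius $2$ around the identity contains the infinite set $H$, and condition (3) of hyperbolic embeddedness fails --- for every value of $D$. No analysis of excursions can repair this: the defect is in $X$ itself. The same collapse occurs for any symmetric $X$ that is a union of double cosets $HgH$ and contains one nontrivial such coset; and since $H$ fixes $v_0$, \emph{any} set defined from the orbit distance $\dist_\Gamma(v_0,g.v_0)$ is of this bi-invariant form. So the entire strategy of reading $X$ off the orbit of the $H$-fixed vertex cannot work.

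This is precisely why the paper takes a longer route and never anchors $X$ at $v_0$. It first enlarges $\Gamma$ to a \emph{thick} $(G,H)$-graph (Definition~\ref{def:thick}), containing a vertex $u_0$ with \emph{trivial} stabilizer adjacent to $v_0$, to $s.u_0$ for $s$ in a finite relative generating set, and to representatives $u_j$ of the finite-stabilizer orbits; then $X$ is defined by adjacency conditions anchored at the $u_i$'s (Proposition~\ref{prop:end}). Because $H$ does not fix any $u_i$, that $X$ is not a union of $H$-double cosets, and the orbit map $g\mapsto g.u_0$, $gH\mapsto g.v_0$ transports paths missing the cone vertex to paths missing $v_0$, which is what makes the fineness transfer go through. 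The price of this route is having to attach $G$-orbits of edges to $\Gamma$ while preserving fineness at infinite-stabilizer vertices; that is Proposition~\ref{lem2.2hs} and Corollary~\ref{attach}, the paper's main technical content, for which your proposal has no substitute. To fix your argument you would need both ingredients: an auxiliary finite-stabilizer orbit through which all of $X$ is routed, and an edge-attachment result providing the required adjacencies --- at which point you have reconstructed the paper's proof.
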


The ``only if" direction of Theorem~\ref{thm:main} is straightforward. In the case that $H\hookrightarrow_h (G,S)$ then the Coned-off Cayley graph $\hat\Gamma(G,H,S)$, defined below,
is a $(G,H)$-graph by Proposition~\ref{q2}  which is discussed in Section~\ref{sec:HypEmbDef}. 

\begin{definition}[Coned-off Cayley Graph]
Let $G$ be a group, $X\subset G$ and $H\leq G$. The \emph{Coned-off Cayley graph $\hat \Gamma(G,H,X)$} is the graph with vertex set the $G$-set $G\cup G/H$, where
$G/H$ is the set of left cosets of $H$ in $G$; and edge set $\{\{g,gx\}\colon g\in G \text{ and }  x\in X\} \cup \{ \{g, gH\} \colon g\in G \}$.  Vertices of $\hat \Gamma(G,H,X)$ in $G/H$ are called \emph{cone vertices}. The notion of coned-off Cayley graph was introduced by Farb~\cite{farb}.
\end{definition}
\begin{proposition} \label{q2}
Let $G$ be a group, $X\subset G$ and $H\leq G$. Then $H \hookrightarrow_h (G,X) $ if and only if   $\hat{\Gamma}(G,H,X)$ is connected, hyperbolic, and fine at cone vertices. 
\end{proposition}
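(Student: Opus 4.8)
The plan is to set the coned-off Cayley graph $\hat\Gamma(G,H,X)$ side by side with the relative Cayley graph $\Gamma(G,X\sqcup H)$ underlying the definition of $\hookrightarrow_h$ recalled in Section~\ref{sec:HypEmbDef}, and to match the two ingredients of that definition—hyperbolicity of $\Gamma(G,X\sqcup H)$ and properness of the relative metric $\widehat d$ on $H$—against the two properties in the statement—hyperbolicity of $\hat\Gamma(G,H,X)$ and fineness at cone vertices. I would begin by reducing fineness at all cone vertices to fineness at the single cone vertex $o=H$. Since $G$ acts on $\hat\Gamma(G,H,X)$ by graph automorphisms and acts transitively on the set $G/H$ of cone vertices, any automorphism carrying $o$ to $gH$ induces an isometry $(T_o\hat\Gamma,\angle_o)\to(T_{gH}\hat\Gamma,\angle_{gH})$, so fineness at $o$ is equivalent to fineness at every cone vertex. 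Here $T_o\hat\Gamma$ is exactly the set of elements of $H$, the vertices adjacent to $o=H$.

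The next step is a quasi-isometry comparison. The inclusion $G\hookrightarrow G\cup G/H$ of vertex sets is a quasi-isometry between $\Gamma(G,X\sqcup H)$ and $\hat\Gamma(G,H,X)$: every cone vertex is adjacent to a group element, so the image $G$ is coarsely dense, and a straightening argument—replacing each passage $g\to g'H\to g''$ through a cone vertex by the single $H$-edge $\{g,g''\}$, while leaving $X$-edges untouched—yields $d_\Gamma\le d_{\hat\Gamma}\le 2\,d_\Gamma$ on $G$ (each cone-passage contributes $2$ edges in $\hat\Gamma$ and $1$ edge in $\Gamma$, while $X$-edges are common to both). Since connectedness is transparently preserved under this correspondence and hyperbolicity is a quasi-isometry invariant of geodesic spaces, $\Gamma(G,X\sqcup H)$ is connected and hyperbolic if and only if $\hat\Gamma(G,H,X)$ is connected and hyperbolic.

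The heart of the argument is the comparison of the angle metric $\angle_o$ and the relative metric $\widehat d$ on $H$, obtained by running the same straightening on paths that \emph{avoid} the cone vertex $o=H$. Deleting $o$ deletes exactly the cone-edges $\{h,o\}$ with $h\in H$, hence forbids precisely the jumps inside the identity coset; this matches exactly the prohibition, in the definition of $\widehat d$, of the edges of the complete subgraph $\Gamma(H,H)$ on the identity coset. Crucially, passages through cone vertices $g'H\ne o$ straighten to $H$-edges lying in non-identity cosets, which are permitted on both sides, and an $X$-edge inside the identity coset (the borderline case $x\in X\cap H$) uses neither $o$ nor an edge of $\Gamma(H,H)$, so it too is allowed on both sides. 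Assuming a shortest avoiding path visits each cone vertex at most once (so each visit contributes exactly two edges), tracking lengths gives
\[
\widehat d(h,h')\ \le\ \angle_o(h,h')\ \le\ 2\,\widehat d(h,h')\qquad\text{for all }h,h'\in H,
\]
with both sides simultaneously infinite. Hence $(H,\angle_o)$ is locally finite if and only if $(H,\widehat d)$ is proper.

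Combining the three steps yields the proposition: $H\hookrightarrow_h(G,X)$ means $\Gamma(G,X\sqcup H)$ is connected and hyperbolic and $\widehat d$ is proper, which by the above is equivalent to $\hat\Gamma(G,H,X)$ being connected, hyperbolic, and fine at cone vertices. I expect the main obstacle to be making the straightening and expansion of paths fully rigorous while respecting both avoidance conditions: one must check that shortest avoiding paths can be taken not to revisit cone vertices, that the forbidden-edge conditions correspond exactly (including the borderline overlap $X\cap H$), and that infinite distances on the two sides genuinely coincide. These are exactly the points where a careless argument could silently gain or lose a coset's worth of vertices and thereby destroy the equivalence of local finiteness.
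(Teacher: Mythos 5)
Your proposal is correct and takes essentially the same route as the paper's proof (Lemma~\ref{prop:ConedOffCayleyGraph}): the vertex-inclusion quasi-isometry $G\hookrightarrow G\cup G/H$ between $\Gamma(G,H\cup X)$ and $\hat\Gamma(G,H,X)$ with factor-$2$ bounds, the bi-Lipschitz comparison of $(H,\hat d_H)$ with the angle metric $(T_H\hat\Gamma,\angle_H)$ at the cone vertex $H$, and $G$-transitivity to pass to all cone vertices. The subtleties you flag at the end (revisits of cone vertices, the borderline $X\cap H$ edges) are indeed harmless, and the paper's own proof elides them in the same way.
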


A group $G$ is finitely generated relative to the subgroup $H$ if there is a finite $S\subset G$ such that $G=\langle S\cup H\rangle$. The converse of Theorem~\ref{thm:main} follows from Proposition~\ref{q2} and the main technical result of the note:  

\begin{prop}\label{fin}
Suppose $G$ is finitely generated relative to the infinite subgroup $H$. If there is a $(G,H)$-graph then there is a subset $X$ of $G$ such that the Coned-off Cayley graph $\hat{\Gamma} (G,H,X)$ is a $(G,H)$-graph.
\end{prop}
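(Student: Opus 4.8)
The plan is to reduce Proposition~\ref{fin} to a single finiteness statement and then attack that statement with the fineness of $\Gamma$. First observe that, for \emph{any} subset $X\subseteq G$, the graph $\hat\Gamma(G,H,X)$ automatically satisfies conditions (2)--(4) of Definition~\ref{def:GHgraph}: its vertex set $G\cup G/H$ is the union of exactly two $G$-orbits; the group vertices have trivial stabilizer while the cone vertex $gH$ has stabilizer $gHg^{-1}$, so (as $H$ is infinite) the infinite vertices are precisely the cone vertices, $V_\infty(\hat\Gamma(G,H,X))=G/H$, with $H$ stabilizing the cone vertex $H$; and every edge stabilizer has order at most two since $G$ acts freely on the group vertices. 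Thus the whole content of the proposition is to choose $X$ so that $\hat\Gamma(G,H,X)$ is connected, hyperbolic, and fine at its cone vertices, which by Proposition~\ref{q2} is the same as arranging $H\hookrightarrow_h(G,X)$. I would fix a vertex $v_0$ of $\Gamma$ with stabilizer exactly $H$ and set $D=\max_i\dist_\Gamma(v_0,v_i)$ over a finite set of orbit representatives $v_i$; connectedness of $\Gamma$ then makes the orbit $Gv_0\cong G/H$ a $D$-dense subset.

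The next step is to build $X$ so that the cone vertices of $\hat\Gamma(G,H,X)$ reproduce, $G$-equivariantly, the geometry of $Gv_0$ inside $\Gamma$. The naive choice $X=\{g\notin H:\dist_\Gamma(v_0,gv_0)\le L\}$ must be avoided: any $X$ depending only on the vertex $gv_0$ is right-$H$-invariant, and then for any $x_0\in X$ and any $h\in H$ one has $x_0^{-1}h\in X$, so the length-two arc $1,x_0,h$ misses the cone vertex $H$ and forces $\angle_H(1,h)\le 2$ for \emph{every} $h\in H$, destroying fineness. Accordingly $X$ must record more than the image of $v_0$. The plan is to fix an edge $e_0=\{v_0,u_0\}$ and let $X$ consist of the elements realizing the edges at $v_0$ of an auxiliary $G$-graph $\Lambda$ on $Gv_0$, where $p,q$ are joined when they are connected in $\Gamma$ by an arc of length $\le L=2D+1$ whose interior misses $Gv_0$. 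Since $Gv_0$ is $D$-dense, $\Lambda$ is connected and the inclusion $Gv_0\hookrightarrow\Gamma$ is a $G$-equivariant quasi-isometry, so $\Lambda$ is hyperbolic; and $\hat\Gamma(G,H,X)$ collapses each coset together with its cone vertex to a set of diameter two, hence is quasi-isometric to $\Lambda$, thus connected and hyperbolic, with $G=\langle X\cup H\rangle$. This disposes of (1)--(4).

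The heart of the argument, and the step I expect to be hardest, is condition (\ref{item:05}), fineness at the cone vertices, which is emphatically not a quasi-isometry invariant. By homogeneity at the cone vertex $H$ (left multiplication by $H$ fixes it and permutes its neighbours), it suffices to show that $B_n=\{h\in H:\angle_H(1,h)\le n\}$ is finite for each $n$; cutting a witnessing arc at its successive returns to the coset $H$ reduces this to finiteness of the set $B_n^\circ$ of $h$ admitting an arc of length $\le n$ with no interior vertex in $H$. I would translate such an arc $1=g_0,\dots,g_k=h$ into a closed path $g_0v_0,\dots,g_kv_0$ based at $v_0$ of length $\le Ln$ which, because no interior $g_i$ lies in $H$, avoids $v_0$ except at its endpoints, so its initial and terminal edges exhibit neighbours of $v_0$ at distance $\le Ln$ in $\angle_{v_0}$. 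Finiteness of $B_n^\circ$ is then extracted from local finiteness of $(T_{v_0}\Gamma,\angle_{v_0})$ together with finiteness of the stabilizer $\mathsf{Stab}(e_0)$, which makes $h\mapsto hu_0$ finite-to-one. The delicate point is uniformity: one must rule out that infinitely many distinct $h$ arise from arcs threading through ever-varying intermediate cosets near $v_0$. This is exactly where fineness of $\Gamma$ at its infinite vertices is indispensable; the representative estimate is that two infinite vertices have only finitely many common neighbours, since any two such neighbours are joined by a length-two path avoiding each vertex and hence lie in a bounded $\angle_{v_0}$-ball. Assembling these estimates along the arc---controlling each excursion by the angle metric at an infinite vertex and each return to $H$ by a finite edge stabilizer---should yield $|B_n^\circ|<\infty$, hence fineness at the cone vertices.

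Finally I would handle the case in which $\Gamma$ has several $G$-orbits of infinite vertices, all with stabilizer conjugate to $H$: only the orbit $Gv_0$ is coned off, and since it remains $D$-dense the quasi-isometry above is unaffected, while the other infinite vertices appear only as interior vertices of the arcs, where the hypothesis that $\Gamma$ is fine at all of $V_\infty(\Gamma)$ supplies the same angle estimates. The remaining points are bookkeeping: that $X$ may be taken symmetric, that the arcs defining $\Lambda$ may be chosen $G$-equivariantly, and that the quasi-isometry constants are uniform over the finitely many orbits.
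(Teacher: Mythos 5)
Your reduction in the first paragraph is correct (conditions (2)--(4) of Definition~\ref{def:GHgraph} are automatic for any coned-off Cayley graph, so everything rests on connectivity, hyperbolicity and fineness at cone vertices), and your warning about right-$H$-invariant choices of $X$ is exactly the right difficulty to isolate. But your own construction falls into the trap you identified. The set $X$ you define --- ``the elements realizing the edges at $v_0$'' of the auxiliary graph $\Lambda$ --- depends only on the vertex $g.v_0$, i.e.\ only on the coset $gH$: since $E(\Lambda)$ is $G$-invariant and $H$ fixes $v_0$, one gets $HXH=X$ and $X=X^{-1}$. Your paragraph-two argument then applies verbatim: for any $x_0\in X\setminus H$ and any $h\in H$ we have $x_0^{-1}h\in X$, so $[1,x_0,h]$ avoids the cone vertex $H$ and $\angle_H(1,h)\leq 2$ for every $h\in H$, and $\hat\Gamma(G,H,X)$ is provably \emph{not} fine at cone vertices. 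The edge $e_0=\{v_0,u_0\}$ you fix never actually enters the definition of $X$; and if instead you meant to choose one representative per $\Lambda$-edge, such a choice cannot be made $G$-equivariantly (equivariance forces $H$-invariance back), and in any case the fineness argument below still fails for the reason in the next paragraph. There is also a second structural problem: the marker $u_0$ need not have finite stabilizer at all. In a $(G,H)$-graph every vertex may have infinite stabilizer --- e.g.\ the Farey graph for $G=\PSL_2(\ZZ)$ with $H$ a maximal parabolic is a $(G,H)$-graph with a single orbit of vertices, all stabilizers conjugates of $H$. Manufacturing a vertex with trivial stabilizer adjacent to $v_0$ (and to the translates $s.u_0$) \emph{without destroying fineness} is precisely what the paper's thickening step does (Remark~\ref{rem:trivialStabilizer}, Definition~\ref{def:thick}, Proposition~\ref{prop:Existencethick}), and its engine is the edge-attachment result Proposition~\ref{lem2.2hs} together with Corollary~\ref{attach} --- the main technical content of the paper, for which your proposal has no substitute.

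The fineness argument itself also has a gap independent of the choice of $X$: your translated closed path based at $v_0$ yields $\angle_{v_0}(w_1,w_k)\leq Ln$ where $w_1$ and $w_k$ are its first and last interior vertices, but \emph{both} of these range over a priori infinite subsets of $T_{v_0}\Gamma$ (one for each of the infinitely many $g_1\in X$ and each realizing arc), while local finiteness of $(T_{v_0}\Gamma,\angle_{v_0})$ only bounds balls about a \emph{fixed} center; and to exploit finiteness of $G_{e_0}$ you would need the translated path to terminate in the edge $h.e_0$, which nothing in the construction guarantees. The paper solves both problems at once with the thick structure: in Proposition~\ref{prop:end} the quasi-isometry sends group vertices $g\mapsto g.u_0$ (not to $g.v_0$) and cone vertices $gH\mapsto g.v_0$, so that every edge of $\hat\Gamma$ off the cone vertices corresponds to a short path in $\Gamma$ avoiding $V_\infty(\Gamma)$, and a path in $\hat\Gamma-\{H\}$ from $1$ to $h$ becomes a path in $\Gamma$ from the \emph{fixed} vertex $u_0$ to $h.u_0$ avoiding $v_0$; then $\angle_{v_0}(u_0,h.u_0)\leq 3r$, fineness of $\Gamma$ at $v_0$ leaves finitely many possible values of $h.u_0$, and triviality of $G_{u_0}$ recovers $h$. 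So while your overall frame (two-orbit reduction, quasi-isometry for hyperbolicity, path translation for fineness) parallels Proposition~\ref{prop:end}, the proposal is missing the paper's key idea --- thickening via equivariant edge attachments --- and as written it constructs a graph that fails the very condition at stake.
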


The skeleton of the proof of Proposition~\ref{fin} is as follows. We define the notion of \emph{thick} $(G,H)$-graph, see Definition~\ref{def:thick}. In Section~\ref{sec:05}, we prove that if there is a $(G,H)$-graph, then there is a thick $(G,H)$-graph, see Proposition~\ref{prop:Existencethick}; and  that if there is a thick $(G,H)$-graph then there is $X\subset G$ such that  $\hat{\Gamma} (G,H,X)$ is a $(G,H)$-graph, see Proposition~\ref{prop:end}. These two results yield Proposition~\ref{fin} and the main result follows.
 
Roughly speaking, a thick $(G,H)$-graph is a $(G,H)$-graph $\Delta$ that contains as a $G$-subgraph the coned-off Cayley graph $\Gamma(G,H,S)$ for $S$ a finite relative generating set of $G$ with respect to $H$. To construct a thick graph from a $(G,H)$-graph we perform a finite number of attachments of $G$-orbits of edges and vertices. Proving that attaching a new $G$-orbit of edges to a $(G,H)$-graph produces a new $(G,H)$-graph is a non-elementary argument which is the content of Section~\ref{sec:ModifyingGH-graphs}. Specifically, Proposition~\ref{lem2.2hs} generalizes results from Wise and the first author~\cite[Lemma 2.9]{MPW}, and Bowditch~\cite[Lemma 2.3]{bowditch}; the arguments in those references do not seem to generalize to our broader context and a new strategy was required.

\subsection*{Acknowledgments.} We thank Hadi Bigdely for proof reading parts of the manuscript. We also thank the referee for comments and corrections. Some of the results of this  note are based on the Master thesis of the second author at Memorial University of Newfoundland under the supervision of the first author~\cite{Rashid20}. In that work,  an attempt to prove a weaker version of Theorem~\ref{thm:main} is outlined, using a more restrictive  notion of $(G,H)$-graph. 
 The first author acknowledges funding by the Natural Sciences and Engineering Research Council of Canada NSERC.

\section{Preliminaries}

Most graphs considered in this note are $1$-dimensional simplicial complexes. The only exception is in Section~\ref{sec:HypEmbDef} where we recall the definition of hyperbolically embedded subgroup from~\cite{osin2} and prove Proposition~\ref{fin} which allow us to consider only simplicial graphs for the rest of the manuscript. 

A \emph{graph} is an ordered pair $(V,E)$, where $V$ is a set, and $E$ is a relation on $V$ that is anti-reflexive and symmetric. Elements of set $V$ are called \textit{vertices}, and elements of set $E$ are called \textit{edges}. For a graph $\Gamma$, we denote $V(\Gamma)$ and $E(\Gamma)$ its vertex and edge set, respectively. If $v\in V(\Gamma)$, $e\in E(\Gamma)$ and $v\in e$, then $v$ is \textit{incident} to $e$. Vertices incident to the same edge are called \textit{adjacent}. 
For a vertex $w \in V(\Gamma)$, the graph  $\Gamma - w$ is defined as the graph with vertex set $V(\Gamma) - \{w\}$ and edge set $E(\Gamma)- \{\{v,w\} \mid v \in V(\Gamma) \}$. 

A \emph{path} or an \emph{edge-path} from a vertex $v_0$ to a vertex $v_n$ of $\Gamma$ is a sequence of vertices $[v_0, v_1 \dots , v_n]$, where ${v_i}$ and $v_{i+1}$ are adjacent for all $i \in \{0, \dots , n-1 \}$. Its \emph{reverse-path} would be $[v_n, v_{n-1} \dots , v_0]$. A \emph{subpath} of the path $[v_0,v_1,\ldots,v_n]$ is a path of the form $[v_i,v_{i+1},\ldots ,v_j]$ for some $0\leq i < j\leq n$, or of the form $[v_i]$ for some $0\leq i\leq n$. The length of a path is one less than the total number of vertices in the sequence. If no vertex on a path appears in the sequence more than once, the path is called an \emph{embedded path}. The concatenation of two paths $\alpha = [u_0, u_1 \dots , u_n]$ and $\beta = [v_0, v_1 \dots , v_m] $ such that $u_n=v_0$ is $[\alpha,\beta] = [u_0, u_1 \dots , u_n, v_1 \dots , v_m]$. Analogously, the concatenation of a vertex $a$ and a path $\alpha = [u_0, u_1 \dots , u_n]$ such that $a$ and $u_0$ are adjacent is  $[a,\alpha]=[a,u_0, u_1 \dots , u_n]$.

A graph is \emph{connected} if there is a path between any two vertices. In a connected graph, the \emph{path-distance} between vertices   is the length of the shortest path between them; this defines a metric on the set of vertices called  the \emph{path metric}. For a graph $\Gamma$, we denote this distance by $\dist_\Gamma$. An \emph{$(L,C)$-quasi-isometry} $q\colon \Gamma\to \Delta$ between connected graphs is a function $q\colon V(\Gamma) \to V(\Delta)$ such that 
\[ \frac{1}{L}\dist_\Gamma(x,y) - C \leq \dist_\Delta(q(x),q(y)) \leq L\dist_\Gamma(x,y)+C\]
for any $x,y\in V(\Gamma)$, and for any $z\in V(\Delta)$ there is $x\in V(\Gamma)$ such that $\dist_\Delta(q(x),z)\leq C$. 

A graph with a $G$-action is called  \emph{a $G$-graph}.  For a vertex $v$ of $\Gamma$, the \emph{$G$-stabilizer} of $v$ is 
$G_v=\{g \in G \colon g.v = v\}$, and the $G$-orbit of $v$ is $G.v=\{g.v\colon g\in G\}$. Define analogously   $G$-stabilizers and   $G$-orbits of edges. For a $G$-graph $\Gamma$, we denote by $V_\infty(\Gamma)$  the set of vertices    with infinite $G$-stabilizer.

\section{Hyperbolically Embedded Subgroups and Coned-off Cayley Graphs}\label{sec:HypEmbDef}

\begin{definition}[Hyperbolically Embedded Subgroups]\cite{osin2}\label{def:hypemb}
Let $G$ be a group, let $H$ be a subgroup, and let $X\subset G$. Suppose that $G$ is generated by $X\cup H$, that is, $X$ is a \emph{relative generating set} of $G$ with respect to $H$. Let $\Gamma(G, X\sqcup   H)$ be the 
Cayley graph of $G$ whose edges are labeled by letters from the 
alphabet $X\sqcup H$. 
Note that $\Gamma(G, X\sqcup H)$ is not a simplicial graph. 
For  $h,k\in H$, let $\hat d_H (h,k)$ be the length of the shortest edge-path from $h$ to $k$ with the property that if an edge is labeled by an element of $H$ then the endpoints of the edge are not elements of the subgroup $H$ (this type of path is called admissible); if there is no  admissible path between $h$ and $k$, let $\hat d_\lambda (h,k)=\infty$. 
The   subgroup $H$ is \emph{hyperbolically embedded in $G$ with respect to $X$}, denoted as $H \hookrightarrow_h (G,X)$, if 
\begin{enumerate}
    \item $G$ is generated by $X\cup H$,
    \item $\Gamma(G,X\sqcup H)$ is a hyperbolic graph, and
    \item  the metric space $(H, \hat d_\lambda)$ has the property that any ball of finite radius has finitely many elements.
\end{enumerate}
If $H \hookrightarrow_h (G,X)$ for some $X$, then we write $H \hookrightarrow_h G$. 
\end{definition}

The conclusion of Proposition~\ref{q2} is contained in the following lemma.  

\begin{lem}
\label{prop:ConedOffCayleyGraph} \label{khat}  \label{lem3.2} \label{cqc} \label{ccg}
Let $G$ be a group, $X\subseteq G$, and $H\leq G$ an infite subgroup. Then 
\begin{enumerate}
    \item $G=\langle X \cup H \rangle$ if and only if $\hat{\Gamma}(G,H,X)$ is connected if and only if $\Gamma(G, H\cup X)$ is connected.
    \item If $G=\langle X\cup H \rangle$, then $\hat{\Gamma}(G,H,X)$ is quasi-isometric to ${\Gamma}(G,H \cup X)$.
    \item $\hat\Gamma(G,H,X)$ is fine at $G/H$ if and only if $(H,\hat d_H)$ is locally finite.
    \item $H\hookrightarrow_h (G,X)$ if and only if $\hat\Gamma(G,H,X)$ is   connected, hyperbolic and fine at cone vertices.
\end{enumerate}
\end{lem}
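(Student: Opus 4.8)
The plan is to establish the four parts in order and then read off the final equivalence~(4) by combining~(1)--(3) with the definition of hyperbolic embedding and the quasi-isometry invariance of hyperbolicity. For~(1), I would translate paths into words. A path in $\hat\Gamma(G,H,X)$ from the identity to a vertex $g$ reads off an expression of $g$ as a product of elements of $X$ (recorded by the $X$-edges) together with jumps $g'\mapsto g'h$ realized by the length-two detour $g'\to g'H\to g'h$ through a cone vertex; conversely every word in $X\cup H$ produces such a path. Hence $\hat\Gamma(G,H,X)$ is connected exactly when every group element is reachable from the identity, that is, when $G=\langle X\cup H\rangle$. The same bookkeeping applied to $\Gamma(G,H\cup X)$, whose edges are labelled directly by $X\sqcup H$, yields the remaining equivalence.

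For~(2), I would define $q\colon V(\hat\Gamma(G,H,X))\to V(\Gamma(G,H\cup X))=G$ to be the identity on $G$ and to send each cone vertex $gH$ to a chosen coset representative. The key point is that an $X$-edge is common to both graphs, while an $H$-labelled edge of $\Gamma(G,H\cup X)$ corresponds to the length-two cone detour in $\hat\Gamma(G,H,X)$; thus distances are distorted by at most a factor of two in each direction, and every cone vertex lies within distance one of a group element, so $q$ is coarsely surjective. This produces an $(L,C)$-quasi-isometry with $L=2$ and a bounded additive constant.

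The heart of the argument is~(3). First I would note that $G$ acts transitively on the cone vertices $G/H$ by left multiplication, by graph automorphisms preserving the angle metrics, so fineness at every cone vertex reduces to fineness at the single vertex $v=H=eH$. The neighbours $T_v\hat\Gamma(G,H,X)$ of this vertex are precisely the elements of the coset $H$. The decisive identification is that a path in $\hat\Gamma(G,H,X)-\{H\}$ between two elements of $H$ is, up to the factor-two cone detours, the same data as an admissible path between them in $\Gamma(G,X\sqcup H)$: deleting the cone vertex $H$ forbids exactly the free jumps within the coset $H$, which is the admissibility constraint that an $H$-labelled edge may not have both endpoints in $H$. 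Consequently the angle metric $\angle_v$ and the metric $\hat d_H$ are bi-Lipschitz equivalent on $H$, so one is locally finite if and only if the other is. I expect the main obstacle to be making this correspondence precise while tracking both the factor-of-two distortion and the endpoint condition on $H$-edges.

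Finally, for~(4), I would unwind the definition: $H\hookrightarrow_h(G,X)$ asserts that $G=\langle X\cup H\rangle$, that $\Gamma(G,X\sqcup H)$ is hyperbolic, and that $(H,\hat d_H)$ has finite balls. By~(1) the generation hypothesis is equivalent to connectedness of $\hat\Gamma(G,H,X)$; by~(2) and the quasi-isometry invariance of hyperbolicity, $\hat\Gamma(G,H,X)$ is hyperbolic if and only if $\Gamma(G,X\sqcup H)$ is; and by~(3) fineness at cone vertices is equivalent to local finiteness of $(H,\hat d_H)$. Assembling these three equivalences gives the stated characterization.
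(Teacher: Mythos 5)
Your proposal is correct and follows essentially the same route as the paper: a factor-two comparison of distances between $\hat\Gamma(G,H,X)$ and $\Gamma(G,H\cup X)$ for connectedness and the quasi-isometry, the bi-Lipschitz identification of $(T_H\hat\Gamma,\angle_H)$ with $(H,\hat d_H)$ together with transitivity of the $G$-action on cone vertices for fineness, and assembly of (1)--(3) with quasi-isometry invariance of hyperbolicity for (4). The only cosmetic difference is that you map $\hat\Gamma(G,H,X)$ onto $\Gamma(G,H\cup X)$ by collapsing cone vertices, while the paper uses the reverse inclusion $G\hookrightarrow G\cup G/H$; these are quasi-inverse to each other.
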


\begin{proof} Consider the inclusion of the vertex set of $\Gamma(G, H\cup X)$ into the vertex set of $\hat \Gamma(G,H,X)$. For any $a,b\in G$,
they are adjacent by an edge of 
$\Gamma(G, H\cup X)$ with label in $H$ if and only if $a$ and $b$ are both adjacent to the same cone vertex of $\hat\Gamma(G,H,X)$;  and they are adjacent by an edge with label in $X$ if and only if they are adjacent in $\hat\Gamma(G,H,X)$.  It follows that
$\hat\Gamma (G,H,X)$ and    $\Gamma(G,H\cup X)$ are both connected, or both disconnected. Hence, if both graphs are connected,
\[ \frac12\dist_{\hat\Gamma}(x,y)\leq \dist_{\hat\Gamma}(x,y) \leq 2\dist_\Gamma(x,y) \]
for any $x,y\in G$. In particular,  the inclusion $G\hookrightarrow G\cup G/H$ is a quasi-isometry $\hat\Gamma (G,H,X ) \to \Gamma(G,H\cup X)$.  For the third statement, observe that the metric spaces $(H, \hat d_H)$ and $(T_H \hat \Gamma, \angle_H)$   have the same underlying set and
\[ \frac12 \hat d_H (h,k) \leq   \angle_H(h,k)  \leq 2\hat d_H (h,k) \]
for any $h,k\in H$.
Moreover,  the metric spaces $(T_{gH}\hat\Gamma, \angle_{gH})$ for $gH\in G/H$ are all isometric. Therefore $(H, \hat d_H)$ is a locally finite metric space if and only if $\hat \Gamma$ is fine at $G/H$.
\end{proof}

\section{Edge-attachments to $(G,H)$-graphs} \label{sec:ModifyingGH-graphs}

In this section, we prove Proposition~\ref{lem2.2hs} and Corollary~\ref{attach} stated below.  

\begin{definition}
Let $\Gamma$ and $\Gamma'$ be $G$-graphs such that $V(\Gamma)=V(\Gamma')$ as $G$-sets. Let $u,v\in V(\Gamma)$ distinct vertices such that $\{u,v\} \not \in E(\Gamma)$. If
 \begin{align*}
V(\Gamma') &= V(\Gamma),\\
E(\Gamma') &= E(\Gamma) \cup \left\{\{g.u,g.v\} \mid g \in G \right\},
\end{align*}
then we say that $\Gamma'$ is \emph{obtained from $\Gamma$ by attaching a $G$-orbit of edges with representative $\{u,v\}$}. 
\end{definition}

\begin{proposition} \label{lem2.2hs}
Let $\Gamma$ be a connected  $G$-graph such that the $G$-stabilizers of edges are finite, and let $u,v \in V(\Gamma)$ such that $u \neq v$. Suppose that $\Gamma'$ is a $G$-graph obtained from $\Gamma$ by  attachment a  $G$-orbit of edges with   representative  $\{u,v\} \in V(\Gamma)$. Then
\begin{enumerate}
    \item If $a \in V(\Gamma)$ and $\Gamma$ is fine at $a$, then $\Gamma'$ is fine at $a$.
    \item The graph $\Gamma'$ is connected and the inclusion $\imath\colon\Gamma \hookrightarrow \Gamma'$ is a quasi-isometry.
\end{enumerate}

\end{proposition}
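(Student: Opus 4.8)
The plan is to handle the three items separately, with~(1) the genuinely hard one. Set $D=\dist_\Gamma(u,v)$; since $\{u,v\}\notin E(\Gamma)$ we have $D\geq 2$, and every new edge $\{g.u,g.v\}$ satisfies $\dist_\Gamma(g.u,g.v)=D$ because $G$ acts by automorphisms.

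Item~(2) is routine. Since $V(\Gamma)=V(\Gamma')$ and $E(\Gamma)\subseteq E(\Gamma')$, the graph $\Gamma'$ is connected and $\dist_{\Gamma'}\leq\dist_\Gamma$. Conversely, replacing each new edge traversed by a $\Gamma'$-geodesic with a $\Gamma$-geodesic of length $D$ between its endpoints yields $\dist_\Gamma(x,y)\leq D\,\dist_{\Gamma'}(x,y)$; as $\imath$ is the identity on vertices it is a $(D,0)$-quasi-isometry. For item~(3), write $T_a\Gamma'=T_a\Gamma\cup N$, where $N$ collects the new neighbours of $a$. Each $m\in N$ forces $a\in\{g.u,g.v\}$, so $N=\{g.v:g.u=a\}\cup\{g.u:g.v=a\}$. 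If the first set is nonempty, choose $g_0$ with $g_0.u=a$; it then equals $g_0.(G_u.v)$, and since $G_a=g_0G_ug_0^{-1}$ acts on it by $g_0mg_0^{-1}\cdot g_0k.v=g_0mk.v$ this is a single $G_a$-orbit, and likewise for the second set. Hence $N$ is at most two $G_a$-orbits, and adjoining these to the finitely many $G_a$-orbits of $T_a\Gamma$ proves~(3).

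For item~(1) I would first reformulate fineness at a vertex: a $G$-graph $\Lambda$ is fine at $a$ if and only if for every $x\neq a$ and every $R$ the set $\{m\in T_a\Lambda:\dist_{\Lambda-\{a\}}(x,m)\leq R\}$ is finite. Indeed, if this set is nonempty and $m_0$ is one of its elements, then $\angle_a(m_0,m)\leq\dist_{\Lambda-\{a\}}(m_0,x)+\dist_{\Lambda-\{a\}}(x,m)\leq 2R$ for every other element $m$, so the set sits inside the $\angle_a$-ball $B_{\angle_a}(m_0,2R)$; when $x\in T_a\Lambda$ the set is itself such a ball, giving the converse. Applying this to $\Gamma$ supplies the basic finiteness input I will lean on: for every $x\neq a$ and every $R$ the set $\Phi_R(x):=\{m\in T_a\Gamma:\dist_{\Gamma-\{a\}}(x,m)\leq R\}$ is finite. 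The goal is the same statement for $\Gamma'$.

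The difficulty is that $\Gamma'-\{a\}$ is $\Gamma-\{a\}$ together with the new edges $\{g.u,g.v\}$ avoiding $a$, and such an edge is a \emph{shortcut}: its endpoints are at $\Gamma$-distance $D$, yet deleting $a$ may separate them, so $\dist_{\Gamma-\{a\}}(g.u,g.v)$ can be arbitrarily large or infinite. Worse---and this is exactly why the arguments of Lemma~2.9 of Wise and the first author and of Lemma~2.3 of Bowditch do not transfer---the vertices $u,v$ may have infinite stabiliser, so a single vertex $y=g.u$ can be incident to infinitely many new edges, namely to all of $g.(G_u.v)$; bounded detours around new edges are therefore unavailable. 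The strategy I would pursue is to lift a $(\Gamma'-\{a\})$-geodesic $P$ from $x$ to $m\in T_a\Gamma'$ to a walk in $\Gamma$ by replacing each new edge with a length-$D$ $\Gamma$-geodesic, and to track the finitely many neighbours of $a$ through which these short geodesics may pass: when a replacing geodesic avoids $a$ its endpoints lie in a controlled $\Phi_D$, while when it crosses $a$, say along $s\to\cdots\to p\to a\to q\to\cdots\to t$, the entry and exit neighbours $p,q$ are at $\Gamma$-distance at most $D$ from $s,t$ and reachable avoiding $a$, hence again lie in finite sets of the form $\Phi_D$. The main obstacle is the interaction of these crossings with the infinite-stabiliser vertices, and here I expect to need the finite edge-stabiliser hypothesis in an essential way: I would bound, for each vertex $y$, the number of new edges at $y$ whose length-$D$ geodesic crosses $a$ and terminates at a neighbour of $a$, by showing that an infinite such family is incompatible with the finiteness of the relevant $\Phi_D$ sets together with finiteness of edge stabilisers. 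Organising this as an induction on $R$ (equivalently on the number of shortcuts used by $P$), with the $\Phi_D$ finiteness as the base mechanism and the edge-stabiliser bound controlling the branching created by each shortcut, should give finiteness of $\{m\in T_a\Gamma':\dist_{\Gamma'-\{a\}}(x,m)\leq R\}$, that is, fineness of $\Gamma'$ at $a$. I expect the principal technical difficulty to be precisely the bookkeeping that keeps these finite sets from proliferating as $R$ grows, which is presumably the novelty the authors refer to.
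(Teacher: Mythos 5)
Your items (2) and (3) are correct, and they coincide with the paper's arguments: the replacement of each new edge by a $\Gamma$-geodesic gives the bilipschitz bound $\dist_{\Gamma'}\leq\dist_\Gamma\leq D\,\dist_{\Gamma'}$, and the new neighbours of $a$ form at most two $G_a$-orbits. You have also correctly diagnosed why item (1) is hard (shortcuts whose endpoints may be separated by deleting $a$, and infinitely many new edges at a single vertex when $G_u$ or $G_v$ is infinite), and your general plan --- replace new edges by geodesics, track crossings through $a$, use finite edge stabilizers to kill infinite branching, induct --- is the same skeleton as the paper's proof.

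However, for item (1), which is the actual content of the proposition, what you give is a strategy outline rather than a proof, and the step you defer (``the bookkeeping that keeps these finite sets from proliferating'') is precisely where the work lies; moreover the plan as stated has a circularity. Your finite sets $\Phi_D(s)$ and $\Phi_D(t)$ are anchored at the endpoints $s,t$ of each shortcut traversed by the path $P$, but those endpoints range over an a priori infinite collection of vertices: they need not be neighbours of $a$, so no fineness hypothesis controls them, and bounding the possible $s,t$ reachable within the budget is essentially the statement being proved. The paper avoids this by anchoring \emph{everything} at $a$: it works with escaping paths from $a$ to $b$ (Lemma~\ref{fatv}), replaces every new edge by a $G$-translate of one \emph{fixed} embedded geodesic $\alpha$ from $u$ to $v$ --- equivariance of this choice is essential and your sketch never fixes it; with unrelated geodesic replacements the edge-stabilizer argument breaks, since the corners appearing at $a$ are no longer translates of finitely many fixed corners --- and then decomposes the replaced path as $[a,\gamma_1,a,\ldots,a,\gamma_m]$ with each $\gamma_i$ avoiding $a$. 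The finiteness is then propagated by a backward induction over these segments (Lemma~\ref{g}) through nested subsets of $T_a\Gamma$ built by two alternating closure operations: adjoining endpoints of translated corners of $\alpha$ at $a$ (finite by finite edge stabilizers) and adjoining angle-balls of radius $n=k\ell$ (finite by fineness at $a$); this is Lemma~\ref{g1}. Finally, the new neighbours of $a$ themselves (the set $X_0$) require a separate edge-stabilizer argument (Lemma~\ref{lem:eduardo}) that your sketch does not supply: your orbit count from item (3) shows they form at most two $G_a$-orbits, which does not give finiteness of those appearing on short escaping paths. Without these mechanisms your induction cannot close, so the proposal leaves the main assertion unproved.
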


The first statement of  Proposition~\ref{lem2.2hs} is the main technical result of this note. It is a generalization of~\cite[Lemma 2.9]{MPW} and~\cite[Lemma 2.3]{bowditch} where the results are proved under the assumption that the graph $\Gamma$ is fine. The weaker assumption of having fineness at only a subset of vertices requires a different and non-trivial proof strategy. 
  
We record a corollary that is  used in the proof of the main result of the note.  

\begin{corollary}\label{attach}
Let $\Gamma$ be a $(G,H)$-graph and let $u,v \in V(\Gamma)$ such that $u \neq v$. If $\Gamma'$ is a $G$-graph obtained from $\Gamma$ by the  attachment of a  $G$-orbit of edges with  representative $\{u,v\}$, then $\Gamma'$ is a $(G,H)$-graph and the inclusion $\imath\colon\Gamma \hookrightarrow \Gamma'$ is a quasi-isometry.
\end{corollary}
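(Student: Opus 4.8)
The plan is to verify directly that $\Gamma'$ satisfies the five conditions of Definition~\ref{def:GHgraph}, leaning on Proposition~\ref{lem2.2hs} for most of them and on the basic observation that, by the definition of edge attachment, $V(\Gamma')=V(\Gamma)$ as $G$-sets. Consequently the vertices, their $G$-orbits, their $G$-stabilizers, and the distinguished set $V_\infty$ are all unchanged when passing from $\Gamma$ to $\Gamma'$.

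First I would dispatch the conditions that transfer immediately. Because $V(\Gamma')=V(\Gamma)$ as $G$-sets, $\Gamma'$ has finitely many $G$-orbits of vertices and its vertex stabilizers coincide with those of $\Gamma$; in particular they are finite or conjugate to $H$, with one equal to $H$, so conditions~(2) and~(3) hold and moreover $V_\infty(\Gamma')=V_\infty(\Gamma)$. Proposition~\ref{lem2.2hs}(2) yields that $\Gamma'$ is connected and that $\imath\colon\Gamma\hookrightarrow\Gamma'$ is a quasi-isometry, which also establishes the quasi-isometry assertion of the corollary; hyperbolicity of $\Gamma'$ then follows from that of $\Gamma$ by quasi-isometry invariance of hyperbolicity for geodesic spaces, giving condition~(1). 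For condition~(5), each $a\in V_\infty(\Gamma')=V_\infty(\Gamma)$ is a vertex at which $\Gamma$ is fine by condition~(5) for $\Gamma$, so Proposition~\ref{lem2.2hs}(1) shows $\Gamma'$ is fine at $a$; hence $\Gamma'$ is fine at $V_\infty(\Gamma')$.

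The one condition demanding real work is~(4): the newly attached edges must have finite $G$-stabilizer. The old edges retain their finite stabilizers, and every new edge is a $G$-translate of $\{u,v\}$, so it suffices to prove $G_{\{u,v\}}$ finite, for which it is enough to show $K:=G_u\cap G_v$ is finite, since $G_{\{u,v\}}$ contains $K$ with index at most $2$. If $G_u$ is finite this is immediate, so I may assume $u\in V_\infty(\Gamma)$, whence $\Gamma$, and therefore $\Gamma'$, is fine at $u$. The key step is to exploit this fineness: choosing an embedded path $[u=w_0,w_1,\dots,w_n=v]$ in the connected graph $\Gamma$, the vertex $w_1$ lies in $T_u\Gamma\subseteq T_u\Gamma'$ and the subpath $[w_1,\dots,w_n=v]$ avoids $u$, so $\angle_u(w_1,v)\leq n-1$ in $\Gamma'$. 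Since every $k\in K$ fixes both $u$ and $v$ and acts as an isometry of $(T_u\Gamma',\angle_u)$, the orbit $K.w_1$ lies in the angle-ball of radius $n-1$ about $v$, which is finite by fineness at $u$. Thus $K.w_1$ is finite, so $K_{w_1}=K\cap G_{w_1}$ has finite index in $K$; but $K_{w_1}\leq G_u\cap G_{w_1}=G_{\{u,w_1\}}$ is finite because $\{u,w_1\}\in E(\Gamma)$ has finite stabilizer. Hence $K$ is finite, and condition~(4) follows.

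I expect this last step to be the main obstacle, because there is no purely group-theoretic reason for $G_u\cap G_v$ to be finite: two vertices of infinite stabilizer are conjugates of $H$, and distinct conjugates of $H$ may intersect in an infinite subgroup. The fineness of $\Gamma'$ at $u$ provided by Proposition~\ref{lem2.2hs}(1) is therefore indispensable, and it is precisely here that the strengthening from fineness everywhere to fineness at a single vertex is put to use.
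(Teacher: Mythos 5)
Your proof is correct and follows essentially the same route as the paper: conditions (2) and (3) are immediate, conditions (1) and (5) come from Proposition~\ref{lem2.2hs} (plus quasi-isometry invariance of hyperbolicity), and the only substantive work is condition (4), which both you and the paper reduce to finiteness of $G_u\cap G_v$ via fineness at a vertex of infinite stabilizer. Your argument for that finiteness is exactly the content of the paper's Lemma~\ref{lem2.1hs} (you rederive it inline, with the angle-ball centred at $v$ in $\Gamma'$ rather than at the first vertex of the connecting path in $\Gamma$), which the paper simply cites at this point.
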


The proof of Proposition~\ref{lem2.2hs} is discussed in subsection~\ref{subsec:proofAttaching}, and the proof of Corollary~\ref{attach} is in subsection~\ref{subsec:CorAttaching}

\subsection{A couple of Lemmas on Fine Graphs}

Let $\Gamma$ be a graph. A path $[u, u_1 \dots , u_n]$ in $\Gamma$ is an   \emph{escaping path from $u$ to $v$} if $v=u_i$ for some $i\in \{1, \dots , n\}$, and $u_i \neq  u$ for every $i \in \{1, \dots , n\}$.
For vertices $u$ and $v$ of $\Gamma$ and $k \in \mathbb{Z}_{+}$,  define:
\[
\vec{uv} (k)_\Gamma = \{w \in T_u \Gamma \mid w \text{ belongs to an escaping path from }u\text{ to }v\text{ of length } \leq k\}.
\]
 
\begin{lemma} \label{fatv}
A graph $\Gamma$ is fine at $u \in V(\Gamma)$ if and only if $\vec{uv}(k)_\Gamma$ is a finite set for every integer $k>0$ and every vertex $v \in V(\Gamma)$. 
\end{lemma}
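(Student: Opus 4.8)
The plan is to reduce the entire statement to a single clean reformulation of the set $\vec{uv}(k)_\Gamma$ in terms of the path distance in the subgraph $\Gamma-\{u\}$, and then read off both implications from it. Write $\dist_{\Gamma-\{u\}}$ for the path distance in $\Gamma-\{u\}$, with the value $\infty$ when the two vertices lie in different components; note that by definition the angle metric is exactly the restriction of this distance, namely $\angle_u(x,y)=\dist_{\Gamma-\{u\}}(x,y)$ for all $x,y\in T_u\Gamma$. The key step is to prove that for every vertex $v\in V(\Gamma)$ and every integer $k>0$,
\[
\vec{uv}(k)_\Gamma=\{\, w\in T_u\Gamma \mid \dist_{\Gamma-\{u\}}(w,v)\leq k-1 \,\}.
\]
For the inclusion $\supseteq$ I would prepend $u$: given $w\in T_u\Gamma$ and a shortest path $[w=p_0,\dots,p_m=v]$ in $\Gamma-\{u\}$ with $m\leq k-1$, the path $[u,p_0,\dots,p_m]$ is an escaping path from $u$ to $v$ of length $\leq k$ through $w$. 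For $\subseteq$, if $w\in\vec{uv}(k)_\Gamma$ lies on an escaping path $[u,u_1,\dots,u_n]$ with $n\leq k$, say $w=u_j$ and $v=u_i$ with $i,j\geq 1$, then the subpath joining $u_j$ to $u_i$ avoids $u$ and has length $|i-j|\leq n-1\leq k-1$, giving the required bound.

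Granting this reformulation, the backward implication is immediate. Assuming every $\vec{uv}(k)_\Gamma$ is finite, fix $x\in T_u\Gamma$ and an integer $r\geq 0$, and specialize the reformulation to $v=x$ and $k=r+1$:
\[
\vec{ux}(r+1)_\Gamma=\{\, w\in T_u\Gamma \mid \angle_u(w,x)\leq r \,\}.
\]
Thus the $\angle_u$-ball of radius $r$ about $x$ is finite for every $x$ and every $r$, so $(T_u\Gamma,\angle_u)$ is locally finite and $\Gamma$ is fine at $u$.

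For the forward implication I would fix $v$ and $k$ and assume $\Gamma$ is fine at $u$. If $\vec{uv}(k)_\Gamma$ is empty there is nothing to prove; otherwise I choose an anchor $w_0\in\vec{uv}(k)_\Gamma$. For any $w\in\vec{uv}(k)_\Gamma$ both $w$ and $w_0$ lie within $\dist_{\Gamma-\{u\}}$-distance $k-1$ of $v$, so the triangle inequality in $\Gamma-\{u\}$ yields $\angle_u(w,w_0)=\dist_{\Gamma-\{u\}}(w,w_0)\leq 2(k-1)$. Hence $\vec{uv}(k)_\Gamma$ is contained in the $\angle_u$-ball of radius $2(k-1)$ about $w_0$, which is finite by fineness at $u$.

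The only delicate point — and the reason one cannot simply identify $\vec{uv}(k)_\Gamma$ with an angle-ball centred at $v$ — is that the target vertex $v$ need not belong to $T_u\Gamma$, so $\angle_u(\,\cdot\,,v)$ need not even be defined. The anchoring device in the forward implication is exactly what circumvents this: instead of measuring angles to $v$, I measure them to a fixed member of the set and route through $v$ by the triangle inequality. Beyond this, the proof is a routine unwinding of the definitions of escaping path, of $T_u\Gamma$, and of the angle metric, together with the off-by-one bookkeeping in the length estimates, so I expect the reformulation itself to be where essentially all of the content lies.
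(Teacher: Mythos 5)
Your proof is correct and takes essentially the same approach as the paper: your reformulation $\vec{uv}(k)_\Gamma=\{w\in T_u\Gamma \mid \dist_{\Gamma-\{u\}}(w,v)\leq k-1\}$ packages exactly the paper's two displayed inclusions, namely $\vec{uv}(k)_\Gamma \subseteq B_{T_u\Gamma}(w,2k-2)$ (your anchor-plus-triangle-inequality step) and $B_{T_u\Gamma}(w,k)\subseteq \vec{uw}(k+1)_\Gamma$ (your prepend-$u$ step). Yours is simply a more detailed write-up of the same argument, with the minor edge cases ($v=u$, empty set) handled explicitly.
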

\begin{proof}
Observe that for any $u,v\in V(\Gamma)$, $k>0$ and any $w\in \vec{uv}(k)_\Gamma$,
\[ \vec{uv}(k)_\Gamma \subseteq B_{T_u \Gamma}(w, 2k-2) \quad \text{ and } \quad    B_{T_u \Gamma}(w, k) \subseteq \vec{uw}(k+1),\]
where $B_{T_u \Gamma}(w, r)$ denotes the closed ball in $(T_u \Gamma,\angle_u)$ centered at $w$ of radius $r$. The statement of the lemma is then an immediate consequence.
\end{proof}

\begin{lemma} \label{lem2.1hs}
Let $\Gamma$ be a connected  $G$-graph with finite edge stabilizers. Suppose that $\Gamma$ is fine at $u \in V(\Gamma)$. Then for any vertex $v \in V(\Gamma)$, $G_u \cap G_v$ is finite or $u = v$.
\end{lemma}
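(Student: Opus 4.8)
The plan is to prove the statement directly by showing that, whenever $u\neq v$, the group $K:=G_u\cap G_v$ is finite. The strategy is to exploit fineness at $u$ through the escaping-path characterization of Lemma~\ref{fatv}: I will confine the entire $K$-orbit of a well-chosen vertex adjacent to $u$ inside a single finite set $\vec{uv}(k)_\Gamma$, and then convert this into finiteness of $K$ using that edge stabilizers are finite.

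First I would fix $n=\dist_\Gamma(u,v)$, which is a positive integer since $\Gamma$ is connected and $u\neq v$, and choose an embedded geodesic $[u=w_0,w_1,\dots,w_n=v]$. The key observation is that not only $w_1$, but every translate $g.w_1$ with $g\in K$, lies in $\vec{uv}(n)_\Gamma$. Indeed, for $g\in K$ the path $[u,g.w_1,\dots,g.w_n=v]$ is the image of the chosen geodesic under the graph automorphism $g$ (using $g.u=u$ and $g.v=v$), hence it is again an embedded path from $u$ to $v$; being embedded it meets $u$ only at its initial vertex, so it is an escaping path from $u$ to $v$ of length $n$. Since $g.w_1\in T_u\Gamma$ belongs to this escaping path, $g.w_1\in\vec{uv}(n)_\Gamma$. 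By Lemma~\ref{fatv}, fineness at $u$ guarantees that $\vec{uv}(n)_\Gamma$ is a finite set, so the orbit $K.w_1$ is finite.

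I would then conclude with an orbit--stabilizer argument. Finiteness of $K.w_1$ means that the stabilizer $K_{w_1}=\{g\in K\colon g.w_1=w_1\}$ has finite index in $K$. Every element of $K_{w_1}$ fixes both $u$ and $w_1$, hence fixes the edge $\{u,w_1\}$, so $K_{w_1}$ is contained in the $G$-stabilizer of that edge, which is finite by hypothesis. A finite-index subgroup being finite forces $K$ itself to be finite, as desired.

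The main conceptual step is the middle paragraph: recognizing that all the second vertices $g.w_1$ of the translated geodesics are second vertices of escaping paths from $u$ to $v$ of uniformly bounded length, and that this is precisely the data controlled by fineness at the single vertex $u$. The only point requiring care is verifying that each translated geodesic is genuinely escaping, i.e.\ that it never returns to $u$; this is where embeddedness of geodesics together with the fact that $g$ acts as a bijective graph automorphism is used. Everything after the finiteness of $\vec{uv}(n)_\Gamma$ is routine.
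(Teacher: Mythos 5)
Your proof is correct and follows essentially the same route as the paper's: translate a geodesic from $u$ to $v$ by elements of $K=G_u\cap G_v$, use fineness at $u$ to conclude the orbit $K.w_1$ is finite, then apply orbit--stabilizer together with finiteness of edge stabilizers. The only cosmetic difference is that you invoke Lemma~\ref{fatv} and the sets $\vec{uv}(n)_\Gamma$, whereas the paper bounds the orbit directly inside a ball of the angle metric $(T_u\Gamma,\angle_u)$; these are equivalent formulations of the same use of fineness.
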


\begin{proof} Suppose $u\neq v$ and let $H$ denote $G_u\cap G_v$. Since $\Gamma$ is connected there is a minimum length path  $[u_0,u_1,\ldots,u_k]$ from $u$ to $v$ with $k\geq1$. Let $U=\{gu_1 \colon g\in G_u\cap G_v\}$ and observe that this is an $H$-set. Since $H$ fixes both $u$ and $v$, it follows that 
$U \subseteq B_{T_u\Gamma}(u_1, 2k-2)$. Therefore, by the assumption that $\Gamma$ is fine at $u$,   $U$ is finite $H$-set. It follows that for any vertex $w \in U$, the edge $e=\{u,w\}$ is fixed by a finite index subgroup $H\cap G_e$ of $H$. Since edge $G$-stabilizers are finite,  $G_e\cap H$ is finite subgroup and therefore $H$ is finite. 
\end{proof}

\subsection{Proof of Proposition~\ref{lem2.2hs} } \label{subsec:proofAttaching}

\begin{proof}[Proof of Proposition~\ref{lem2.2hs}] By Lemma~\ref{fatv}, to prove that $\Gamma'$ is fine at $a$  is enough to show that for every integer $k\geq 1$ and for every  $b \in V(\Gamma')$ with $b \neq a$, the set  $\vec{ab}(k)_{\Gamma'}$ is  finite. Fix $b\in V(\Gamma')=V(\Gamma)$, $b\neq a$, and $k\geq1$.

Let $\alpha$ be a minimal length embedded path from $u$ to $v$ in $\Gamma$, such a path exists since $\Gamma$ is connected. Suppose that the length of  $\alpha$ is $\ell$,
\[\alpha=[u,u_1,\ldots, u_{\ell-1}, v] .\]
Observe that if $\ell=1$ then $\Gamma=\Gamma'$ and there is nothing to prove. Assume that $\ell>1$.  Let $\hat{\alpha}$ be the reverse path from $v$ to $u$. Let 
\[ n = k\ell\]
and let 
\[X_0 = \{ x\in \vec{ab}(k)_{\Gamma'} \mid x \not\in T_a\Gamma \}\]
To prove that $\vec{ab}(k)_{\Gamma'}$ is finite, we   define inductively a sequence of finite subsets $\vec{ab}(n)_\Gamma=W_n\subseteq W_{n-1} \subseteq \cdots \subseteq W_1 $, then we show that $X_0$ is a finite set, and conclude by proving that $\vec{ab}(k)_{\Gamma'} \subseteq W_1 \cup X_0$.

For the rest of the proof, we use the following terminology:  A subpath of length two of a path $P$ is called a  \emph{corner of $P$}.  

Define subsets $W_1,W_2,\dots , W_{n}$ and $Z_1,Z_2,\dots , Z_{n-1}$ of $T_a \Gamma$ as follows. Let 
\begin{align*}
W_n &= \vec{ab} (n) _\Gamma.
\end{align*}
Suppose $W_j$ has been defined, and let 
\begin{align*}
Z_{j-1} &= W_j \cup \{z \in T_a \Gamma \mid \exists w\in W_j\ \exists g\in G \  \exists c \text{ corner   of }  \alpha \text{ or } \hat{\alpha}\ \text{such that } g.c = [z,a,w] \}. \\  
W_{j-1} &= W_j \cup \{w \in T_a \Gamma \mid \exists z \in Z_{j-1} \text{ such that } \angle_{T_a \Gamma} (z,w) \leq n \}.
\end{align*}
It is immediate that  
\begin{align} \label{w}
W_j \subseteq Z_{j-1} \subseteq W_{j-1},
\end{align}
since if  $z \in Z_{j-1}$ then $\angle_{T_a \Gamma} (z,z) = 0$ and hence $z \in W_{j-1}$.

\begin{lem}\label{g1}
For $j \leq n$, $Z_{j-1}$ and $W_j$ are finite sets. In particular, $W_1$ is finite.
\end{lem}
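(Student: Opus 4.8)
The plan is to establish the two finiteness statements simultaneously by downward induction on $j$, running from $j=n$ to $j=1$. The base case $j=n$ is immediate: $W_n = \vec{ab}(n)_\Gamma$ is finite because $\Gamma$ is fine at $a$, by Lemma~\ref{fatv}. For the inductive step I would assume $W_j$ is finite and deduce, in order, that $Z_{j-1}$ and then $W_{j-1}$ are finite.

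The key step is the finiteness of $Z_{j-1}$. By definition, every $z \in Z_{j-1}\setminus W_j$ arises from a corner $c=[x_0,x_1,x_2]$ of $\alpha$ or $\hat\alpha$, an element $g\in G$, and a vertex $w\in W_j$ with $g.c=[z,a,w]$; equivalently $g.x_1=a$, $g.x_2=w$, and $z=g.x_0$. I would fix the corner $c$ and the vertex $w$ and count the resulting $z$. If $g_0$ is one element with $g_0.x_1=a$ and $g_0.x_2=w$, then any other such $g$ satisfies $g_0^{-1}g\in G_{x_1}\cap G_{x_2}$, so $g$ lies in the coset $g_0(G_{x_1}\cap G_{x_2})$ and $z=g.x_0$ ranges over a set of size at most $|G_{x_1}\cap G_{x_2}|$. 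Since $x_1$ and $x_2$ are adjacent, $G_{x_1}\cap G_{x_2}$ is contained in the stabilizer of the edge $\{x_1,x_2\}$, which is finite by hypothesis; hence each pair $(c,w)$ contributes finitely many $z$. As $\alpha$ and $\hat\alpha$ have only finitely many corners and $W_j$ is finite by the inductive hypothesis, $Z_{j-1}\setminus W_j$ is a finite union of finite sets, so $Z_{j-1}$ is finite.

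Finiteness of $W_{j-1}$ is then routine: by construction $W_{j-1}=W_j\cup\bigcup_{z\in Z_{j-1}}B_{T_a\Gamma}(z,n)$, each ball being taken in $(T_a\Gamma,\angle_a)$. Fineness of $\Gamma$ at $a$ makes each such ball finite, and since $Z_{j-1}$ and $W_j$ are finite, $W_{j-1}$ is finite as well. This completes the induction and yields in particular that $W_1$ is finite.

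I expect the only genuine obstacle to be the finiteness of $Z_{j-1}$, and within it the single observation that pins everything down: translating a fixed corner so that its middle vertex lands on $a$ and its far endpoint lands on a prescribed $w$ determines the image of the near endpoint up to a finite edge stabilizer. One could equally invoke Lemma~\ref{lem2.1hs}, since $w\neq a$ forces $G_a\cap G_w$ to be finite, but exploiting the adjacency of the two constrained corner vertices is more direct, as it appeals only to the finiteness of edge stabilizers.
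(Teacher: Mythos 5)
Your proof is correct and follows essentially the same route as the paper: downward induction from $j=n$ with base case given by Lemma~\ref{fatv}, finiteness of $Z_{j-1}$ deduced from finite edge stabilizers, and finiteness of $W_{j-1}$ deduced from fineness at $a$ via finite balls in $(T_a\Gamma,\angle_a)$. The only difference is presentational: where you count directly --- the elements $g$ with $g.x_1=a$ and $g.x_2=w$ form a coset of $G_{x_1}\cap G_{x_2}$, which sits inside a finite edge stabilizer --- the paper runs a contradiction-plus-pigeonhole argument producing infinitely many elements stabilizing the edge $\{a,w\}$; the underlying observation is identical, and your formulation has the minor merit of making explicit the pigeonholing over the finitely many corners.
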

\begin{proof}
\emph{If $W_j$ is finite, then $Z_{j-1}$ is finite.} This is a consequence of the assumption that  $G$-stabilizers of edges in $\Gamma$ are finite.  By contradiction, assume that $Z_{j-1}$ is infinite and  $W_j$ is finite. Since there are finitely many corners in $\alpha$ and $\hat\alpha$, the pigeon-hole argument shows that there is $w\in W_j$ and there is  corner $c$ of $\alpha$ or $\hat\alpha$ for which there are infinitely many distinct  $g_0,g_1, \ldots \in G$
and infinitely many distinct $z_0,z_1,z_2,\ldots \in Z_{j-1}$ such that $g_i.c=[z_i,a,w]$. Note that for $i\geq 1$, the element $g_i g_{0}^{-1}$ stabilizes the edge $e=\{a,w\} \in E(\Gamma)$ and hence $G_e$ is infinite, a contradiction.  

\emph{If $Z_j$ is finite then $W_j$ is finite.}  Indeed, since $\Gamma$ is fine at $a$, balls  in $(T_a\Gamma, \angle_a)$ are finite; therefore if $Z_j$ is finite then $\{w \in T_a \Gamma \mid \exists z \in Z_{n-1} \text{ such that } \angle_{T_a \Gamma} (z,w) \leq n \}$ is finite and hence $W_j$ is finite.

The lemma follows since   Lemma~\ref{fatv} implies that $\vec{ab}_\Gamma (n) = W_n$ is  finite.
\end{proof}

Let $\gamma'$ be a  path in $\Gamma'$. Let $\gamma$ be the path in $\Gamma$ obtained by replacing each subpath of length one of the form $[g.u,g.v]$ or $[g.v,g.u]$ for some $g \in G$ by the path $g. \alpha$ or $g. \hat{\alpha}$ (make a choice of $g$ if necessary), respectively. Since this construction is used once more in the proof, we refer to the path $\gamma$  as the \emph{$\alpha$-replacement of $\gamma'$}.

\begin{lem} \label{g}
Let $\gamma'$ be an escaping path in $\Gamma'$ from $a$ to $b$ of length at most $k$, and let $\gamma$ be its $\alpha$-replacement.  Then
\[ \gamma'\cap T_a\Gamma    \subseteq \gamma \cap T_a\Gamma \subseteq W_1,\]
where $\gamma' \cap T_a\Gamma$ is the set of vertices of $\gamma'$ that belong to $T_a
\Gamma$, and $\gamma\cap T_a\Gamma$ is defined analogously.
\end{lem}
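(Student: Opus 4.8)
\emph{The plan is to prove the two inclusions separately.} The inclusion $\gamma'\cap T_a\Gamma\subseteq \gamma\cap T_a\Gamma$ is immediate from the construction of the $\alpha$-replacement: passing from $\gamma'$ to $\gamma$ only \emph{inserts} the interior vertices of the translates $g.\alpha$ and $g.\hat\alpha$ and never deletes a vertex, so every vertex of $\gamma'$ is a vertex of $\gamma$; intersecting with $T_a\Gamma$ gives the claim. (Recall $V(\Gamma)=V(\Gamma')$, so all vertices involved already lie in $\Gamma$, and $\gamma$ has length at most $k\ell=n$ since each of the at most $k$ edges of $\gamma'$ is replaced by a path of length $\ell$.)

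The substance is the inclusion $\gamma\cap T_a\Gamma\subseteq W_1$, and I would begin with a structural dichotomy for the neighbours of $a$ occurring along $\gamma$. List the vertices of $\gamma$ lying in $T_a\Gamma$ in the order they occur, say $w_1,\dots,w_M$, and for two consecutive such vertices examine the subpath of $\gamma$ between them. Either (A1) it avoids $a$, in which case it is a path in $\Gamma-\{a\}$ of length at most the length of $\gamma$, so $\angle_{T_a\Gamma}(w_s,w_{s+1})\le n$; or (A2) it meets $a$. The crucial point is that, since $\gamma'$ is escaping from $a$, the vertex $a$ occurs on $\gamma$ only at the start or as an \emph{interior} vertex of some single translate $g.\alpha$ (or $g.\hat\alpha$); consequently an interior occurrence of $a$ forces the subpath to be exactly a corner $[w_s,a,w_{s+1}]=g.c$ for a corner $c$ of $\alpha$ or $\hat\alpha$. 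This identification is precisely the datum entering the definition of $Z_{j-1}$, and handling both orientations is exactly why that definition allows corners of $\alpha$ \emph{and} of $\hat\alpha$.

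Next I would decompose $\gamma$ at its occurrences of $a$ into blocks $B_0,\dots,B_r$, where $r$ is the number of interior occurrences of $a$: within each block the $T_a\Gamma$-vertices are pairwise within angle $\le n$ by (A1), and consecutive blocks are joined by a translated corner by (A2). Since $b\neq a$ appears on $\gamma$ as a vertex of $\gamma'$, the initial segment of its block from $a$ to $b$ is an escaping path from $a$ to $b$ of length $\le n$ in $\Gamma$, so at least one $T_a\Gamma$-vertex of that block lies in $\vec{ab}(n)_\Gamma=W_n$. Anchoring there, I would induct outward one block at a time: the corner joining a block whose $T_a\Gamma$-vertices lie in $W_j$ to the next block places that block's first neighbour of $a$ in $Z_{j-1}$ (the corner step), after which the angle bound (A1) and the definition of $W_{j-1}$ place all of its $T_a\Gamma$-vertices in $W_{j-1}$ (the angle step). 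As the number of blocks is $r+1$ with $r$ at most the number of edges of $\gamma'$, hence $r\le k$, the choice $n=k\ell$ is calibrated so the index never drops below $1$, giving $\gamma\cap T_a\Gamma\subseteq W_1$.

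\emph{The main obstacle is the structural dichotomy, specifically part (A2):} one must argue that an occurrence of $a$ in the interior of $\gamma$ cannot come from $\gamma'$ itself (using that $\gamma'$ escapes from $a$) and must therefore be an interior vertex of a single translate $g.\alpha$, so that its two $\gamma$-neighbours form a genuine translate of a corner of $\alpha$ or $\hat\alpha$; it is exactly this step that ties the geometry of the edge-attachment to the combinatorial set $Z_{j-1}$. A secondary but genuine difficulty is the index bookkeeping: because each index decrement bundles one corner step with one angle step while the propagation needs them in the opposite phase, one must check that the ``one block per index'' rate still closes within the budget $n=k\ell$, examining in particular the small cases where $r$ is comparable to $k$.
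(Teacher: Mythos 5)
Your proof is correct and follows essentially the same route as the paper's: decompose $\gamma$ at its occurrences of $a$ into maximal subpaths avoiding $a$ (the paper's $\gamma_1,\dots,\gamma_m$, your blocks), anchor the block containing $b$ in $W_n=\vec{ab}(n)_\Gamma$ via the escaping subpath $[a,\gamma_m]$, and run the downward induction in which a translated corner of $\alpha$ or $\hat\alpha$ feeds $Z_{j-1}$ and the angle bound $\leq n$ feeds $W_{j-1}$, with the budget $n=k\ell$ comfortably absorbing the at most $k+1$ blocks. The one divergence is in your favour: your angle step places \emph{every} $T_a\Gamma$-vertex of a block into the relevant $W_{j-1}$, including vertices in the interior of a block, whereas the paper's proof asserts the equality $\gamma\cap T_a\Gamma=\{w_1,z_1,\dots,z_{m-1},w_m\}$, which can fail (a vertex adjacent to $a$ in $\Gamma$ may occur mid-block without the path visiting $a$ there), so your version actually repairs a small gap in the published argument.
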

\begin{proof}
By construction, the set of vertices of $\gamma'$ is a subset of the set of vertices of $\gamma$. Hence $\gamma'\cap T_a\Gamma $ is a subset of  $ \gamma \cap T_a\Gamma$.

Observe that $\gamma$ is of the form $[a,\gamma_1, a,\gamma_2, a, \dots ,a, \gamma_m]$, where each $\gamma_i$ does not contain the vertex $a$. Note that $m\leq n$ and that $\gamma$ is not   escaping when $m>1$. Let $w_i$ and $z_i$ denote the initial and terminal vertices of $\gamma_i$, respectively. 
The inclusions~\eqref{w} and the observation that  $\gamma \cap T_a\Gamma =\bigcup_{i=1}^m\gamma_i \cap T_a\Gamma$  show that is enough to prove that  $\gamma_i \cap T_a\Gamma$ is a subset of $W_i$ for $1\leq i\leq m$. This is done inductively using the following five claims.

\emph{Claim 1. $w_m \in W_m$.} Note that  $[a,\gamma_m]$ is an escaping path of length at most $n$ from $a$ to $b$ in $\Gamma$. Therefore $w_m \in \vec{ab}_\Gamma (n) = W_n$. Since $m\leq n$, it follows that  $w_m \in W_n \subseteq W_m$.

\emph{Claim 2. $z_{m-1} \in Z_{m-1}$.} Note that 
$[z_{m-1},a,w_m]$ is the translation of a corner of $\alpha$ or $\hat{\alpha}$;   since $w_m \in W_m$, by the definition, $z_{m-1} \in Z_{m-1}$.

\emph{Claim 3. If $z_{i+1} \in Z_{i+1}$  then $w_{i+1}\in W_{i+1}$}.
Indeed, since $\angle_{T_a \Gamma} (z_{i+1}, w_{i+1}) \leq n$ and $z_{i+1} \in Z_{i+1}$, it follows that $w_{i+1} \in W_{i+1}$.

\emph{Claim 4. If $w_{i+1}\in W_{i+1}$ then $z_i\in Z_i$.}  As $[z_{i},a,w_{i+1}]$ is the translation of a corner of $\alpha$ or $\hat{\alpha}$, if  $w_{i+1} \in W_{i+1}$, then by the definition $z_{i} \in Z_{i}$.

\emph{Claim 5. If $z_{i+1} \in Z_{i+1}$  then $\gamma_{i+1} \cap T_a\Gamma$ is a subset of $W_{i+1}$.}
Let $x\in \gamma_{i+1} \cap T_a\Gamma$. Observe that $\angle_{T_a \Gamma} (z_{i+1}, x) \leq n$. Since $z_{i+1} \in Z_{i+1}$, it follows that $x \in W_{i+1}$.

To conclude, observe that the first four claims imply that $z_i\in Z_i$ for $1\leq i\leq m$. Then the last claim imply that $\gamma_i\cap T_a\Gamma$ is a subset of $W_i \subset W_1$ for $1\leq i\leq m$.  
\end{proof}

\begin{lemma}\label{lem:eduardo}
$X_0$ is a finite set. 
\end{lemma}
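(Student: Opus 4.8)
The plan is to observe that each $x \in X_0$ is joined to $a$ by one of the newly attached edges, to attach to each such $x$ a vertex $w$ lying in the finite set $W_1$, and then to show that only finitely many $x$ can produce a given $w$. First I would dispose of the degenerate case: if $\ell = 1$ then $\{u,v\} \in E(\Gamma)$, so $\Gamma = \Gamma'$, no edges are attached, and $X_0 = \emptyset$; so assume $\ell > 1$. Now fix $x \in X_0$. Since $x \in T_a\Gamma'$ while $x \notin T_a\Gamma$, the edge $\{a,x\}$ is not an edge of $\Gamma$, hence it is an attached edge and there is $g \in G$ with $\{a,x\} = \{g.u, g.v\}$.

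The crucial step is to reroute the escaping path witnessing $x \in \vec{ab}(k)_{\Gamma'}$ into one whose first edge is $\{a,x\}$. Writing such a path as $[a, p_1, \ldots, p_m]$ with $x = p_j$, $b = p_i$ and $m \leq k$, I would let $\delta'$ be $[a,x]$ followed by the subpath of $[a, p_1, \ldots, p_m]$ joining $x = p_j$ to $b = p_i$, traversed in the direction of increasing or decreasing index according to the sign of $i-j$. Since every vertex of the original path after $a$ is different from $a$, the path $\delta'$ is again an escaping path from $a$ to $b$; its length is $1 + |i-j| \leq m \leq k$, and by construction its first edge is the attached edge $\{a,x\}$.

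Next I would apply the $\alpha$-replacement to $\delta'$. In the case $a = g.u$ and $x = g.v$ (the case $a = g.v$, $x = g.u$ is symmetric via $\hat\alpha$), the first edge $[g.u, g.v]$ is replaced by $g.\alpha = [a, g.u_1, \ldots, g.u_{\ell-1}, x]$, so that $w := g.u_1$ is a vertex of the $\alpha$-replacement $\delta$ belonging to $T_a\Gamma$. Applying Lemma~\ref{g} to $\delta'$ then gives $w \in \delta \cap T_a\Gamma \subseteq W_1$, and $W_1$ is finite by Lemma~\ref{g1}.

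It remains to bound, for each fixed $w \in W_1$, the number of $x \in X_0$ obtained in this way. Any $g$ with $g.u = a$ and $g.u_1 = w$ is determined up to right multiplication by $G_u \cap G_{u_1}$, which fixes the edge $\{u,u_1\} \in E(\Gamma)$ pointwise and is therefore finite by the hypothesis on edge stabilizers; hence $x = g.v$ lies in the finite set $g(G_u \cap G_{u_1}).v$. The symmetric case contributes another finite set, so $X_0$ is contained in a finite union, indexed by $W_1$, of finite sets, and is therefore finite. I expect the rerouting step to be the main obstacle: one must check that $\delta'$ is still escaping, has length at most $k$, and begins with the attached edge $\{a,x\}$, since these are precisely the features that allow Lemma~\ref{g} to place the replaced neighbour $w$ in the finite set $W_1$.
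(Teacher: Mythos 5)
Your proof is correct and follows essentially the same route as the paper's: both associate to each $x \in X_0$ a group element $g$ carrying $\{u,v\}$ to $\{a,x\}$, reroute the witness escaping path so that it begins with this attached edge, use the $\alpha$-replacement and Lemma~\ref{g} to place $g.u_1$ (or $g.u_{\ell-1}$) in the finite set $W_1$, and invoke finiteness of edge $G$-stabilizers to finish. The only differences are presentational: you argue directly (a finite-to-one assignment into the finite set $W_1$) where the paper argues by contradiction and pigeonhole, and your explicit rerouting step spells out a detail that the paper asserts without justification when it chooses $\gamma_g'$ with $[a,g.v]$ as an initial subpath.
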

\begin{proof}
Suppose that $X_0$ is an infinite set. 
Note that if $x\in X_0$ then there is $g\in G$ such that either $g.u=a$ and $g.v=x$, or $g.u=x$ and $g.v=a$. Without loss of generality, assume that  the set \[Y=\{g\in G \mid   \text{ $g.u=a$ and $g.v \in \vec{ab}(k)_{\Gamma'}$ and $g.v \not\in \vec{ab}(k)_{\Gamma}$} \}\] is infinite. 

For each $g\in Y$, let $\gamma_g'$ be an escaping path from $a$ to $b$ of length $\leq k$ in $\Gamma'$ that  contains   $[a,g.v]$ as an initial subpath. Let $\gamma_g$ be the $\alpha$-replacement of $\gamma_g'$. Since $\alpha=[u,u_1,\ldots, u_{\ell-1}, v]$, observe that $g.\alpha = [a,g.u_1,\ldots, g.u_{\ell-1}, g.v]$ is an initial subpath of $\gamma_g$.  

By Lemmas~\ref{g1} and~\ref{g},  $W_1$ is finite and $\gamma_g \cap T_a\Gamma \subseteq W_1$ for any $g\in Y$. It follows that  there exists a vertex $w \in W_1$ such that $[a,w]$ is an initial subpath of  $\gamma_g$ for infinitely  many distinct elements $g\in Y$. Therefore, there are infinitely many distinct  elements $g\in Y$ that map the edge $\{u,u_1\}$ to $\{a,w\}$, but that means that the edge $\{a,w\}$ of $\Gamma$ has infinite $G$-stabilizer. This is a  contradiction with the hypotheses on the $G$-graph $\Gamma$.  
\end{proof}

\begin{lemma}\label{lem:Farhan} 
$\vec{ab}(k)_{\Gamma'}\subseteq W_1\cup X_0$.
\end{lemma}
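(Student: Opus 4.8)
The plan is to prove the inclusion by a direct case analysis on a single arbitrary element, reducing everything to the already-established Lemma~\ref{g}. Fix $x \in \vec{ab}(k)_{\Gamma'}$. By the definition of $\vec{ab}(k)_{\Gamma'}$, the vertex $x$ lies in $T_a\Gamma'$ and is a vertex of some escaping path $\gamma'$ from $a$ to $b$ in $\Gamma'$ of length at most $k$.

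I would split into two cases according to whether $x$ is already adjacent to $a$ in $\Gamma$. If $x \notin T_a\Gamma$, then $x$ belongs to $\vec{ab}(k)_{\Gamma'}$ but not to $T_a\Gamma$, which is precisely the defining condition of $X_0$; hence $x \in X_0$. If instead $x \in T_a\Gamma$, then $x$ is a vertex of $\gamma'$ lying in $T_a\Gamma$, so $x \in \gamma' \cap T_a\Gamma$. Applying Lemma~\ref{g} to the escaping path $\gamma'$ and its $\alpha$-replacement gives $\gamma' \cap T_a\Gamma \subseteq W_1$, whence $x \in W_1$. In either case $x \in W_1 \cup X_0$, and since $x$ was arbitrary the claimed inclusion follows.

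The point where the actual content lives is the invocation of Lemma~\ref{g}: that lemma is what guarantees that every old-neighbor vertex on an escaping path of bounded length is trapped inside the set $W_1$, and its proof carries the inductive corner-tracking argument. By contrast, the present statement is purely organizational, namely a partition of $\vec{ab}(k)_{\Gamma'}$ into the \emph{new} neighbors created by the edge attachment (collected in $X_0$) and the \emph{old} neighbors already present in $\Gamma$ (controlled by $W_1$). The only thing to verify carefully is that the defining conditions match on the nose: that ``$x \notin T_a\Gamma$'' is exactly the membership criterion for $X_0$, and that an old neighbor lying on $\gamma'$ indeed qualifies as an element of $\gamma' \cap T_a\Gamma$ in the sense used by Lemma~\ref{g}. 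I do not anticipate any genuine obstacle here; the substance of fineness has already been extracted in Lemmas~\ref{g1} and~\ref{g}, and Lemma~\ref{lem:eduardo} separately establishes the finiteness of $X_0$, so combining this inclusion with those two finiteness statements immediately yields that $\vec{ab}(k)_{\Gamma'}$ is finite, completing the verification that $\Gamma'$ is fine at $a$.
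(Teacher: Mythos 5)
Your proof is correct and follows essentially the same route as the paper: both arguments combine Lemma~\ref{g} (which traps the vertices of an escaping path lying in $T_a\Gamma$ inside $W_1$) with the observation that any remaining vertex of $\vec{ab}(k)_{\Gamma'}$ satisfies the defining condition of $X_0$ verbatim. The only cosmetic difference is that you argue element-by-element with an explicit case split, while the paper phrases the same dichotomy path-by-path.
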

\begin{proof}
Let $\gamma'$ be an escaping path from $a$ to $b$ in $\Gamma'$ of length $\leq k$. Then Lemma~\ref{g} implies that $\gamma'\cap T_a\Gamma \subseteq W_1$. Since any vertex in $\gamma'\cap T_a\Gamma'$ is either in $T_a\Gamma$ or $X_0$, it follows that $\gamma'\cap T_a\Gamma' \subseteq W_1\cup X_0$ which proves the statement.
\end{proof}
By Lemmas~\ref{g1} and~\ref{lem:eduardo}, the set $W_1\cup X_0$ is   finite, hence Lemma~\ref{lem:Farhan} implies that $\vec{ab}(k)_{\Gamma'}$ is finite; this completes the proof that $\Gamma'$ is fine. It is left to address the second item of Proposition~\ref{lem2.2hs}.  

\begin{lemma}
$\Gamma'$ is connected and the inclusion $\Gamma \to \Gamma'$ is a quasi-isometry.
\end{lemma}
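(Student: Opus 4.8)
The plan is to establish both conclusions directly from the construction of $\Gamma'$, reusing the $\alpha$-replacement operation already introduced in the proof. Connectedness is immediate: since $V(\Gamma')=V(\Gamma)$ and $E(\Gamma)\subseteq E(\Gamma')$, every edge-path in $\Gamma$ is also an edge-path in $\Gamma'$; hence any two vertices joined by a path in the connected graph $\Gamma$ are joined by a path in $\Gamma'$.

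For the quasi-isometry claim, I would first observe that $\imath$ is a bijection on vertex sets, so the coarse-surjectivity requirement in the definition of a quasi-isometry holds trivially with constant $0$, and it suffices to compare the two path metrics. The upper bound is immediate from $E(\Gamma)\subseteq E(\Gamma')$: any $\Gamma$-geodesic between $x$ and $y$ is a path in $\Gamma'$, whence $\dist_{\Gamma'}(x,y)\leq \dist_{\Gamma}(x,y)$ for all $x,y\in V(\Gamma)$.

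The lower bound is where the $\alpha$-replacement enters. Let $\alpha$ be the fixed minimal-length embedded path from $u$ to $v$ in $\Gamma$, of length $\ell$, and let $\hat\alpha$ be its reverse. Given $x,y\in V(\Gamma')=V(\Gamma)$, I would choose a $\Gamma'$-geodesic $\gamma'$ from $x$ to $y$ and let $\gamma$ be its $\alpha$-replacement. Then $\gamma$ is a path in $\Gamma$ with the same endpoints: each edge of $\gamma'$ that already lies in $\Gamma$ is retained, while each newly-attached edge $\{g.u,g.v\}$ is replaced by the translate $g.\alpha$ or $g.\hat\alpha$, a path of length $\ell$ in $\Gamma$ joining the same two vertices. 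Since each edge of $\gamma'$ is replaced by a $\Gamma$-path of length at most $\ell$, the length of $\gamma$ is at most $\ell$ times the length of $\gamma'$, giving
\[ \dist_{\Gamma}(x,y)\leq \ell\,\dist_{\Gamma'}(x,y). \]

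Combining the two estimates yields $\tfrac{1}{\ell}\dist_{\Gamma}(x,y)\leq \dist_{\Gamma'}(x,y)\leq \dist_{\Gamma}(x,y)$, so $\imath$ is an $(\ell,0)$-quasi-isometry. I do not expect a genuine obstacle in this step: the only place the hypotheses are used is the existence of the finite-length path $\alpha$, which is guaranteed by the connectedness of $\Gamma$, and the argument is otherwise a routine application of the replacement construction already developed for the fineness statement.
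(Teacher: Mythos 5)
Your proposal is correct and follows essentially the same route as the paper: connectedness from $V(\Gamma)=V(\Gamma')$ together with $E(\Gamma)\subseteq E(\Gamma')$, and the two-sided distance estimate $\dist_{\Gamma'}(x,y)\leq \dist_\Gamma(x,y)\leq \ell\,\dist_{\Gamma'}(x,y)$ obtained by applying the $\alpha$-replacement to a $\Gamma'$-geodesic. The only difference is that you spell out the coarse-surjectivity and the per-edge length bookkeeping, which the paper leaves implicit.
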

\begin{proof}
Since $V(\Gamma)=V(\Gamma')$ and $\Gamma$ is connected, the inclusion $\imath\colon \Gamma \hookrightarrow \Gamma'$ implies that $\Gamma'$ is also connected.
Let $x,y \in V(\Gamma)$. Let $\gamma'$ be a minimal length  path from $x$ to $y$ in $\Gamma'$, and let $\gamma$ be the $\alpha$-replacement of $\gamma'$. Then the length of $\gamma$ is at most $\ell \dist_{\Gamma'}(x,y)$. It follows that
\[ \dist_{\Gamma'}(x,y) \leq \dist_\Gamma (x,y) \leq \ell \dist_{\Gamma'}(x,y),\]
and hence $\imath\colon \Gamma \hookrightarrow \Gamma'$ is a quasi-isometry.
\end{proof}
This completes the proof of Proposition~\ref{lem2.2hs}. 
\end{proof}

\subsection{Proof of Corollary~\ref{attach}}\label{subsec:CorAttaching}

\begin{proof}[Proof of Corollary~\ref{attach}]
Definition~\ref{def:GHgraph} of $(G,H)$-graph has five items that need to be verified for $\Gamma'$. Items (1) and (5)   follow from Proposition~\ref{lem2.2hs}. Items (2) and (3) are immediate from the definition of $\Gamma'$. Item (4) follows from item (5) and Lemma~\ref{lem2.1hs}. 
\end{proof}

\section{Proof of the Main Result and Final Remarks.}\label{sec:05}

\begin{definition}\label{def:thick} 
A $(G,H)$-graph $\Gamma$ is \emph{thick} if it satisfies the following conditions: 
\begin{enumerate}
    \item \label{eq:minimal} It contains vertices $u_0$ and $v_0$ such that    \[ \{u_0,v_0\} \in E(\Gamma),\qquad G_{u_0}=1,\quad \text{ and }\quad  G_{v_0}=H.\]
    
    \item \label{eq:minimal02} There  is   a finite relative generating set $S$ of $G$ with respect to $H$, and a collection $\{u_0,\ldots , u_\ell\}$ of representatives of $G$-orbits of vertices of $\Gamma$ with finite $G$-stabilizers
    such that 
    \[ \{u_0,s.u_0\} \in E(\Gamma)\quad \text{for all $s\in S$, } \quad  \text{ and }\quad  \{u_0, u_j\} \in E(\Gamma) .\]
\end{enumerate}
\end{definition}

\begin{proposition}\label{prop:Existencethick}
Suppose $G$ is finitely generated relative to the infinite subgroup $H$. If there exists a $(G,H)$-graph $\Gamma$ then there is a thick $(G,H)$-graph with a $G$-equivariant quasi-isometry $\Gamma\to \Delta$.
\end{proposition}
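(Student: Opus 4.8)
The plan is to reach thickness (Definition~\ref{def:thick}) in two stages: first enlarge $\Gamma$ by a single $G$-orbit of new vertices so that its first condition holds, and then repeatedly apply the edge-attachment operation of Corollary~\ref{attach} to arrange its second condition. Throughout, every modification will be realized by a $G$-equivariant inclusion that is a quasi-isometry, and the composite of these inclusions will be the desired map $\Gamma\to\Delta$.

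For the first stage, fix a vertex $p$ with $G_p=H$, which exists by the third item of Definition~\ref{def:GHgraph}. I would introduce a free $G$-orbit $G\cdot u_0$ (so that $G_{u_0}=1$) of new vertices and form the graph $\Gamma_1$ with $V(\Gamma_1)=V(\Gamma)\sqcup G\cdot u_0$ and $E(\Gamma_1)=E(\Gamma)\cup\{\{g.u_0,g.p\}\mid g\in G\}$; that is, each new vertex $g.u_0$ is attached by a single edge to $g.p$. Setting $v_0=p$ then produces the edge $\{u_0,v_0\}$ with $G_{u_0}=1$ and $G_{v_0}=H$ demanded by the first condition of Definition~\ref{def:thick}. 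The point is that $\Gamma_1$ is again a $(G,H)$-graph: it has one extra (free, hence finite-stabilizer) orbit of vertices and one extra orbit of edges whose representative $\{u_0,p\}$ has stabilizer $G_{u_0}\cap G_p=1$; connectivity is clear; and since every new vertex has degree one, the inclusion $\Gamma\hookrightarrow\Gamma_1$ is an isometric embedding with $1$-dense image, hence a quasi-isometry, so hyperbolicity is inherited by quasi-isometry invariance. Fineness at a vertex $v\in V_\infty(\Gamma_1)=V_\infty(\Gamma)$ is preserved because each new neighbor of $v$ is a \emph{leaf}: it becomes an isolated point of $(T_v\Gamma_1,\angle_v)$, while the angle metric between old neighbors is unchanged, so all balls remain finite.

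For the second stage, using that $G$ is finitely generated, choose a finite relative generating set $S$ of $G$ with respect to $H$ (a finite generating set of $G$ works). Let $u_0$ be the representative of the new free orbit and let $u_1,\dots,u_\ell$ be representatives of the remaining $G$-orbits of vertices with finite stabilizer; there are finitely many such orbits by the second item of Definition~\ref{def:GHgraph}. For each $s\in S$ with $\{u_0,s.u_0\}\notin E$ (here $s.u_0\neq u_0$ because $G_{u_0}=1$) I would attach the $G$-orbit of the edge $\{u_0,s.u_0\}$, and for each $j$ with $\{u_0,u_j\}\notin E$ I would attach the $G$-orbit of $\{u_0,u_j\}$. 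By Corollary~\ref{attach} each such attachment yields a $(G,H)$-graph together with a quasi-isometric inclusion, and since these operations neither change the vertex set nor delete edges they preserve the first condition of Definition~\ref{def:thick} already secured. After these finitely many attachments the resulting graph $\Delta$ satisfies both conditions of Definition~\ref{def:thick}, and the composite $G$-equivariant inclusion $\Gamma\hookrightarrow\Gamma_1\hookrightarrow\cdots\hookrightarrow\Delta$ is a quasi-isometry.

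The main obstacle is the first stage: unlike the edge-attachments, adjoining a new orbit of vertices is not covered by Corollary~\ref{attach}, so the five axioms of a $(G,H)$-graph must be verified directly for $\Gamma_1$. The only delicate point is fineness at the infinite-stabilizer vertices, and here the deliberate choice to attach each new vertex by a single edge is what makes the check routine, since leaves contribute only isolated points to the angle metrics and can neither shorten nor create paths among existing neighbors.
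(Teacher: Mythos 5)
Your proposal is correct and takes essentially the same approach as the paper: adjoin a free $G$-orbit of leaf vertices to obtain a trivially stabilized vertex $u_0$ (the paper's Remark~\ref{rem:trivialStabilizer}), then use Corollary~\ref{attach} finitely many times to attach the edge orbits $\{u_0,v_0\}$, $\{u_0,s.u_0\}$ for $s\in S$, and $\{u_0,u_j\}$. Your only deviations are cosmetic --- you hang the free orbit directly off the vertex $p$ with $G_p=H$, merging the remark and the first edge attachment into one step, and you write out the leaf-attachment verification (isolated points in the angle metric, degree-one vertices never on shortest paths) that the paper dismisses as an easy observation.
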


 \begin{remark}\label{rem:trivialStabilizer}
 Let $\Gamma$ be a $(G,H)$-graph,  let  $\Gamma'$ be a $G$-graph, 
 and suppose $\Gamma$ is a $G$-subgraph of $\Gamma'$.
 If  there is an edge 
 $\{u,v\}\in E(\Gamma')$ such that $G_v$ is trivial, and
  \begin{align*}
V(\Gamma') &= V(\Gamma)\sqcup G.v,\\
E(\Gamma') &= E(\Gamma) \sqcup \left\{\{g.u,g.v\} \mid g \in G \right\},
\end{align*}
 then it is an observation that $\Gamma'$ is a $(G,H)$-graph and that the inclusion $\imath\colon\Gamma \hookrightarrow \Gamma'$ is a quasi-isometry. Hence, if there is a $(G,H)$-graph then there is a $(G,H)$-graph that  contains a vertex with trivial stabilizer.
\end{remark}

\begin{proof}[Proof of Proposition~\ref{prop:Existencethick}]
By Remark~\ref{rem:trivialStabilizer}, we can assume that $\Gamma$ contains a vertex $u_0$ with trivial stabilizer. Let $v_0\in V_\infty(\Gamma)$ such that $G_{v_0}=H$. Now we will 
perform a finite sequence of attachments $G$-orbits of edges to $\Gamma$. By Corollary~\ref{attach} the resulting graph $\Delta$ will be a $(G,H)$-graph with a $G$-equivariant quasi-isometry $\Gamma\to H$.
By attaching a  $G$-orbit of edges if necessary assume that $\{u_0,v_0\}\in E(\Gamma)$. Let $S$ be a finite relative generating set of $G$ with respect to $H$, and let $\{u_0,\ldots , u_\ell\}$ be a collection of representatives of $G$-orbits of vertices of $\Gamma$ with finite $G$-stabilizers. By adding finitely many orbits of edges if necessary, assume that $\{u_0,s.u_0\} \in E(\Gamma)$ for all $s\in S$ and $\{u_0, u_j\} \in E(\Gamma)$
for all $1\leq j\leq \ell$. Observe that the resulting graph is a thick $(G,H)$-graph.
\end{proof}

\begin{proposition}\label{prop:end}
Suppose that $H$ is an infinite subgroup of $G$. Let $\Gamma$ be a thick $(G,H)$-graph, and let $v_0$, $\{u_0,\ldots , u_\ell\}$ and $S$ be  as in Definition~\ref{def:thick}. Let 
\[X=\{g\in G \colon  \text{ $\dist_{\Gamma}(u_i, g.u_j)=1$ or $\dist_{\Gamma}(u_i, g.v_0)=1$ for some $0\leq i, j\leq\ell$ } \}\]
Then $q\colon \hat\Gamma(G,H,X) \to \Gamma$ given by  $g\mapsto g.u_0$ and $gH\mapsto g.v_0$ is a quasi-isometry. Moreover, $\hat\Gamma(G,H,X)$ is fine at cone vertices. In particular $\hat\Gamma(G,H,X)$ is a $(G,H)$-graph.
\end{proposition}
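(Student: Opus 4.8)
The plan is to verify the three assertions in turn---that $q$ is a quasi-isometry, that $\hat\Gamma(G,H,X)$ is fine at its cone vertices, and then that the five axioms of Definition~\ref{def:GHgraph} hold---and to assemble them at the end. First I would record two structural facts used throughout. Since $\{u_0,s.u_0\}\in E(\Gamma)$ for every $s\in S$, the first clause defining $X$ (with $i=j=0$) gives $S\subseteq X$; as $S$ relatively generates $G$ we get $G=\langle X\cup H\rangle$, so $\hat\Gamma(G,H,X)$ is connected by Lemma~\ref{ccg}(1). Second, because $G_{u_0}=1$ the orbit map $g\mapsto g.u_0$ is injective on $G$; because $G_{v_0}=H$ the assignment $gH\mapsto g.v_0$ is well defined and injective on $G/H$; and the two images $G.u_0$ and $G.v_0$ are disjoint since their point-stabilizers are finite and infinite respectively. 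Hence $q$ is an injective $G$-equivariant vertex map with $q^{-1}(v_0)=\{eH\}$, a fact that will drive the fineness argument.

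For the quasi-isometry I would prove coarse density, a Lipschitz upper bound, and a lower bound. Coarse density of $G.u_0$ (hence of the image of $q$) is clean: every finite-stabilizer vertex $g.u_j$ is adjacent to $g.u_0$ because $\{u_0,u_j\}\in E(\Gamma)$, and for an arbitrary vertex $w$ the quantity $\dist_\Gamma(w,G.u_0)$ is constant on $G$-orbits since $G.u_0$ is $G$-invariant; as $\Gamma$ has finitely many orbits of vertices, these distances are uniformly bounded. The upper bound is a short check on the two edge types of $\hat\Gamma$: a cone edge $\{g,gH\}$ maps to the $\Gamma$-edge $\{g.u_0,g.v_0\}$ (a translate of $\{u_0,v_0\}$), while an $X$-edge $\{g,gx\}$ maps to a pair at $\Gamma$-distance at most $3$, using thickness together with whichever clause places $x$ in $X$ (the path $u_0\sim u_i\sim x.u_j\sim x.u_0$, or $u_0\sim u_i\sim x.v_0\sim x.u_0$). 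For the lower bound I would lift a $\Gamma$-geodesic between $q(a)$ and $q(b)$ vertex by vertex along the orbit map, sending a finite vertex $f.u_j$ to the group element $f$ and an infinite vertex $f.v_0$ to the cone point $fH$, and then bound the $\hat\Gamma$-distance of each consecutive pair of lifts: a finite--finite edge $f.u_i\sim f'.u_j$ forces $f^{-1}f'\in X$ by the first clause, and a finite--infinite edge $f.u_i\sim f'.v_0$ forces $f^{-1}f'\in X$ by the second clause followed by a single cone edge. This yields a linear bound $\dist_{\hat\Gamma}(a,b)\le L\,\dist_\Gamma(q(a),q(b))$, so $q$ is a quasi-isometry; hyperbolicity of $\hat\Gamma(G,H,X)$ then follows from hyperbolicity of $\Gamma$ by quasi-isometry invariance.

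For fineness at the cone vertices, since the links $(T_{gH}\hat\Gamma,\angle_{gH})$ are pairwise isometric it suffices to treat $eH$, and by Lemma~\ref{fatv} it suffices to show that each set of initial neighbors of bounded-length escaping paths from $eH$ is finite. The mechanism is to transfer fineness from $v_0$, at which $\Gamma$ is fine because $v_0\in V_\infty(\Gamma)$. Here $T_{eH}\hat\Gamma=H$, and the injective orbit map $h\mapsto h.u_0$ carries $H$ into $T_{v_0}\Gamma$ (each $h.u_0$ is adjacent to $v_0=h.v_0$). I would show that an escaping path from $eH$ in $\hat\Gamma$ of length at most $k$ maps, through $q$ and the bounded edge-replacements above, to an escaping path from $v_0$ in $\Gamma$ of length at most some $k'=k'(k)$; its initial neighbor then lies in a set $\vec{v_0 w}(k')_\Gamma$ (with $w=q(b)$), which is finite by Lemma~\ref{fatv} applied at $v_0$. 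Injectivity of the orbit map then bounds the corresponding set of initial neighbors at $eH$, giving local finiteness of $(T_{eH}\hat\Gamma,\angle_{eH})$. With fineness at cone vertices in hand, the axioms of Definition~\ref{def:GHgraph} are immediate: $\hat\Gamma$ is connected and hyperbolic; it has exactly two vertex orbits, $G$ and $G/H$; vertices in $G$ have trivial stabilizer while $gH$ has stabilizer $gHg^{-1}$, with $eH$ having stabilizer $H$; edges have trivial (hence finite) stabilizers because $G$ acts freely on the group-element vertices; and $V_\infty(\hat\Gamma)=G/H$ is exactly the set of cone vertices, where fineness was just established.

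The step I expect to be the main obstacle is the interaction of $q$ with edges incident to the infinite (cone) vertices, both in the quasi-isometry lower bound and in the fineness transfer. The delicate point is that a lift must cross such an edge without the corresponding $\Gamma$-path wandering back through $v_0$---so that ``escaping from $eH$'' genuinely maps to ``escaping from $v_0$''---and that consecutive lifts across these edges stay within bounded $\hat\Gamma$-distance. This is exactly the purpose of the second clause defining $X$: a finite vertex adjacent to a cone-type vertex $g.v_0$ produces an $X$-edge making $e$ adjacent to $g$, hence putting $e$ within distance $2$ of the cone point $gH$, so that adjacencies of $\Gamma$ involving infinite vertices are faithfully reflected in $\hat\Gamma$ while the adjacencies internal to $H$ are carried solely by the cone edges. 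Configurations with two adjacent infinite vertices require the most care and should be handled by inserting bounded-length detours through nearby vertices of $G.u_0$; controlling these detours uniformly, using fineness of $\Gamma$ at $v_0$ and the finiteness of edge-stabilizers, is the technical heart of the argument.
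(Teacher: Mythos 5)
Your overall route is the same as the paper's: use the orbit map $q$ with three-edge replacements for the quasi-isometry bounds, and transfer fineness at the cone vertex $eH$ to fineness of $\Gamma$ at $v_0$ by replacing each edge of an escaping path by a short path in $\Gamma$. The connectivity, coarse-density and Lipschitz parts of your sketch are fine (your density argument is even more careful than the paper's). The genuine gap is exactly at the step you call the delicate point, and your resolution of it is false. The condition $\dist_{\Gamma}(u_i,g.v_0)=1$ in the second clause of the definition of $X$ depends only on the coset $gH$, since $gh.v_0=g.v_0$ for every $h\in H$. Because $\{u_0,v_0\}\in E(\Gamma)$ and $h.v_0=v_0$, every $h\in H$ satisfies this clause with $i=0$; hence $H\subseteq X$. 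Thus any two distinct $h_1,h_2\in H$ are joined by an $X$-edge of $\hat\Gamma(G,H,X)$, so your assertion that ``adjacencies internal to $H$ are carried solely by the cone edges'' fails, and with it the claim that escaping paths from $eH$ map to escaping paths from $v_0$: the path $[h_1,h_2]$ in $\hat\Gamma-\{eH\}$, which witnesses $\angle_{eH}(h_1,h_2)=1$, is replaced by $[h_1.u_0,\,v_0,\,h_2.u_0]$, which passes through $v_0$ and gives no bound on $\angle_{v_0}(h_1.u_0,h_2.u_0)$. Worse, since the link of $eH$ is exactly $H$ and now has diameter $1$ in the angle metric while $H$ is infinite, $\hat\Gamma(G,H,X)$ is not fine at cone vertices at all: with $X$ literally as in the statement, the fineness assertion is false, so no transfer argument can close this gap without first repairing $X$ (for instance, admitting in the second clause only one representative of each coset $gH$, and then treating with care the $X$-edges that enter the subgroup $H$). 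You should know that the paper's own proof stumbles at the same spot, where it claims that every $X$-edge corresponds to a $\Gamma$-path of length at most $3$ avoiding $V_\infty(\Gamma)$; that claim is false for second-clause elements of $X$. So you isolated the crux correctly, but neither your sketch nor the paper's argument resolves it.

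There is a second, independent gap, which you flag but do not fill. Your lower bound lifts a $\Gamma$-geodesic vertex by vertex, handling only edges of type (finite, finite) and (finite, infinite), and sending an infinite vertex $f.v_0$ to $fH$. But Definition~\ref{def:thick} forbids neither edges of $\Gamma$ joining two infinite-stabilizer vertices (Corollary~\ref{attach} even allows attaching such orbits) nor infinite-stabilizer vertices lying outside the single orbit $G.v_0$. Since a $(G,H)$-graph may have infinitely many $G$-orbits of edges, your proposed ``bounded-length detours through nearby vertices of $G.u_0$'' would require a bound uniform over infinitely many orbits; this does not follow from fineness at $V_\infty(\Gamma)$ or finiteness of edge stabilizers in any direct way, and you give no argument. (The paper's proof silently assumes both that every vertex of $\Gamma$ lies in $G.u_0\cup\cdots\cup G.u_\ell\cup G.v_0$ and that every edge has an endpoint with finite stabilizer.)
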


\begin{proof}
Note that if $f,g\in G$ and $\{f,g\}$ is an edge in $\hat\Gamma$ then
$\dist_\Gamma(q(f),q(g))\leq 3$. Indeed, if
$\{f.u_i, g.u_j\}$ is an edge in $\Gamma$ for some $i,j$, then 
$[f.u_0, f.u_i, g.u_j, g.u_0]$ is a path of length $3$ in $\Gamma$. Now observe that for any edge of the form  $\{g,gH\}$ of $\hat\Gamma$, we have that  $\{g.u_0,g.v_0\}$ is an edge of $\Gamma$ and hence $\dist_\Gamma(q(g),q(gH))=1\leq 3$. It follows that  
\[ \dist_{\Gamma}(q(a),q(b))\leq 3\dist_{\hat\Gamma}(a,b) \]
for any $a,b\in V(\hat\Gamma)$. 

On the other hand, any vertex of $\Gamma$ is of the form $g.u_j$ or $g.v_0$ for some $g\in G$ and $0\leq j\leq\ell$.  If $\{g_1.u_i,g_2.u_j\}$ is an edge in $\Gamma$, then $g_1^{-1}g_2\in X$ or equivalently $\{g_1, g_2\}$ is an edge of $\hat\Gamma$. Analogously, if $\{g_1.u_i,g_2.v_0\}$ is an edge in $\Gamma$, then $g_1^{-1}g_2\in X$ and hence $[g_1,g_2,g_2H]$ is path in $\hat\Gamma$. It follows that
\[ \dist_{\hat\Gamma}(a,b) \leq 2\dist_{\Gamma}(q(a),q(b))\]
for any $a,b\in V(\hat\Gamma)$. We have a $q\colon \hat\Gamma \to \Gamma$ is a $G$-equivariant quasi-isometry. It is left to prove that $\hat\Gamma$ is fine at cone vertices. 

The argument above shows that every edge in $\hat\Gamma$ of the form $\{g,gx\}$ with $g\in G$ and $x\in X$ corresponds to an (undirected) path of length at most three in $\Gamma$ between $g.u_0$ and $gx.u_0$ that does not contain any vertex in $V_\infty(\Gamma)$. Moreover, every edge in $\hat\Gamma$ of the form  $\{g,gH\}$ corresponds to the edge $\{g.u_0, g.v_0\}$ in $\Gamma$. In this way, to every path $\alpha$ in $\hat\Gamma$ from $e$ to $g$, corresponds a path $q(\alpha)$ from $u_0$ to $g.u_0$ with the property that $\alpha$ pass through the cone vertex $fH$ if and only if $q(\alpha)$ pass through the vertex $f.v_0$. Moreover $|q(\alpha)|\leq 3|\alpha|$. 

Suppose that $\hat\Gamma(G,H,X)$ is not fine at cone vertices. Then there is $r>0$ such that the set $\Omega=\{h\in H\colon \angle_H(e,h)\leq r\}$ is an infinite set. It follows that the set $\{h.u_0\in T_{v_0}\Gamma \colon h\in \Omega\}$ is an infinite set. For each $h\in \Omega$ there is path $\alpha_h$ from $e$ to $h$ that does not pass through the cone vertex $H$, and $|\alpha_h|\leq r$. Hence $\angle_{v_0}(u_0, h.u_0)\leq |q(\alpha_h)|\leq 3r$ for every $h\in \Omega$. Therefore $\Gamma$ is not fine. This is a contradiction, and therefore $\hat\Gamma$ is fine at cone vertices.
\end{proof}

\begin{remark}
A generalization of the argument above has been used in~\cite[Proof of Theorem C]{HMPS21} to show that, under some assumptions, fineness of coned-off Cayley graphs is preserved under quasi-isometries. In the same article, Proposition~\ref{q2} is used together with other results to show that if $H\hookrightarrow_h G$ and $G$ is finitely presented, then the relative Dehn function of $G$ with respect to $H$ is well defined, see~\cite[Proof of Theorem G]{HMPS21}.
\end{remark}

\subsection{A remark on hyberbolically embedded collections of subgroups} Let $G$ be a group and let $\mathcal{H}$ be a finite collection of subgroups. 

\begin{definition}
A graph $\Gamma$ is a $(G,\mathcal{H})$-graph if $G$ acts on $\Gamma$ and
\begin{enumerate}
\item $\Gamma$ is connected and hyperbolic,
\item there are finitely many $G$-orbits of vertices, 
\item $G$-stabilizers of vertices are finite or conjugates of subgroups in $\mathcal{H}$,
 and for every $H\in\mathcal{H}$ there is a vertex with $G$-stabilizer   equals $H$,
\item $G$-stabilizers  of edges are finite, and
\item  $\Gamma$ is fine at
$V_\infty (\Gamma)$.
\end{enumerate}
\end{definition}

\begin{definition}[Coned-off Cayley Graph]
For  $X\subset G$,  the \emph{Coned-off Cayley graph $\hat \Gamma(G,\mathcal{H},X)$} is the graph with vertex set the $G$-set $G\cup G/\mathcal{H}$, where
$G/\mathcal{H}$ is the set of left cosets $\{gH\colon g\in G\text{ and } H\in \mathcal{H}\}$ in $G$; and edge set $\{\{g,gx\}\colon g\in G \text{ and }  x\in X\} \cup \{ \{g, gH\} \colon g\in G \text{ and } H\in\mathcal{H} \}$.  
\end{definition}

In~\cite[Definition  2.9]{Osin16}, Osin defines the notion of  a collection of subgroups $\mathcal{H}$ being hyperbolically embedded into a group $G$.  The proofs of Proposition~\ref{q2} and Theorem~\ref{thm:main} work  mutatis mutandis to prove the following statements:

\begin{proposition} \label{q2-2}
Let $G$ be a group, $X\subset G$ and let $\mathcal{H}$ be a finite collection of infinite subgroups. Then $\mathcal{H} \hookrightarrow_h (G,X) $ if and only if   $\hat{\Gamma}(G,\mathcal{H},X)$ is connected, hyperbolic, and fine at cone vertices. 
\end{proposition}

\begin{thm}\label{thm:main2}
Let $G$ be a finitely generated group. A finite collection of infinite subgroups $\mathcal{H}$ is hyperbolically embedded in $G$  if and only if there exists a $(G,\mathcal{H})$-graph.
\end{thm}

\bibliographystyle{abbrv}
\bibliography{ref}

\end{document}